\documentclass[11pt, reqno]{amsart}
\usepackage{amsmath, amsthm, amscd, amsfonts, amssymb, graphicx, color}
\usepackage[bookmarksnumbered, colorlinks, plainpages]{hyperref}
\usepackage{mathrsfs}
\newtheorem{theorem}{Theorem}[section]
\newtheorem{lemma}[theorem]{Lemma}
\newtheorem{proposition}[theorem]{Proposition}
\newtheorem{corollary}[theorem]{Corollary}
\theoremstyle{definition}
\newtheorem{definition}[theorem]{Definition}

\theoremstyle{remark}
\newtheorem{remark}[theorem]{Remark}
\numberwithin{equation}{section}
\newcommand{\M}{\mathbb{M}}
\newcommand{\C}{\mathbb{C}}

\begin{document}
\setcounter{page}{1}

 \title[Sharpness Loss for Radon--Nikodym Derivatives]{Sharpness Loss for Radon--Nikodym Derivatives of Completely Positive Maps}

\author[ M. Kian]{  Mohsen Kian}

\address{Mohsen Kian: Department of Mathematics, University of Bojnord, P.O. Box 1339, Bojnord
94531, Iran}
\email{kian@ub.ac.ir}

\renewcommand{\subjclassname}{\textup{2020} Mathematics Subject Classification}
\subjclass[2020]{Primary 46L07, 47A67; Secondary 47B49}

\keywords{Completely positive  map; Radon--Nikodym derivatives; Multiplicative domain; Sharp submaps}
\maketitle

\begin{abstract}
The Radon--Nikodym theorem for completely positive maps identifies the order interval below a map with positive contractions in the commutant of its minimal Stinespring representation.  In this paper, we study the behavior of projection-valued Radon--Nikodym derivatives (sharp submaps) under composition.  We prove that sharpness loss is precisely determined  by the multiplicative defect of the induced pullback map. In the finite-dimensional case, we show that this defect is determined by the orthogonal complement of the composite Kraus relation space, denoted $\mathcal{E}_{\Lambda,\Omega}$. A sharp submap remains sharp if and only if $\mathcal{E}_{\Lambda,\Omega}$ reduces the lifted Radon--Nikodym projection. Furthermore, we establish a tensor-factor criterion for the global preservation of sharpness and explicitly quantify the sharpness-loss defect.
\end{abstract}

\maketitle

\section{Introduction}
Radon--Nikodym theory is a basic tool for comparing positive objects.
For measures, it describes absolute continuity by a density.  For
completely positive maps, the analogous density is no longer a function:
it is an operator living in the commutant of a Stinespring
representation.  This operator-theoretic form of the Radon--Nikodym
theorem goes back to foundational work of Arveson and
Belavkin--Staszewski, and it has since become a standard way to describe
the order interval below a completely positive map, see, for
example, \cite{Arveson,BelavkinStaszewski,Paulbook}.

In quantum information theory the same structure appears in the study
of quantum operations and channels.  Finite-dimensional formulations of
Radon--Nikodym derivatives for quantum operations were developed by
Raginsky, where domination of completely positive maps is described in
terms of operator-valued parameters associated with Kraus and
Stinespring representations.  Related Radon--Nikodym and
Lebesgue-decomposition questions have also been studied for
operator-valued completely positive maps, Hilbert-module-valued maps,
and other generalizations; see, among others,
\cite{AG,Dadkhah,GS,GK, MKP,Okayasu,Raginsky}.

A separate but closely related issue is the behavior of
Radon--Nikodym derivatives under composition.  In the classical
measure-theoretic setting, composition and conditioning lead naturally
to chain-rule phenomena.  In the noncommutative setting the situation is
less direct, since the relevant derivatives live in commutants of
Stinespring representations, and these commutants change when completely
positive maps are composed.  Recent work has studied such chain-rule
phenomena for Radon--Nikodym derivatives of completely positive maps;
see \cite{BLJS}.

The feature studied here is sharpness.  A dominated completely positive
map is called sharp, relative to a fixed dominating map, when its
Radon--Nikodym derivative is a projection.  Such projection-valued
derivatives are natural analogues of sharp effects or sharp events.
However, if a completely positive map is composed with another one, a
projection-valued derivative need not remain projection-valued.  Thus
composition can turn a sharp submap into an unsharp one.

The goal of this paper is to describe exactly when this loss of sharpness occurs and to quantify this defect. Given completely positive maps
\[
    \Omega:\mathscr B\to B(\mathcal H),
    \qquad
    \Lambda:\mathscr A\to\mathscr B,
\]
precomposition by \(\Lambda\) induces a unital completely positive map between the corresponding Radon--Nikodym commutants. Instead of focusing purely on the commutant lifting, we analyze the multiplicative defect of this induced map. By realizing it as a compression of a canonical commutant lift, we obtain a reducing-subspace criterion for the preservation of sharpness.

In finite dimensions, this abstract condition reduces to a concrete linear-algebraic test. Specifically, we show that the loss of sharpness is determined by the orthogonal complement of the relation space of the composite Kraus operators $L_jV_i$. This relation space also yields a tensor-factor condition for the global preservation of sharpness.

The paper is organized as follows. Section~2 recalls the required background on completely positive maps, Radon--Nikodym derivatives, sharp submaps, and multiplicative domains. Section~3 constructs the pullback transfer map and its commutant lift. Section~4 derives the compression and defect formulas and proves the sharpness-preservation criterion. Section~5 introduces the finite-dimensional Kraus relation description, the tensor criterion for global sharpness preservation, and concludes with a finite commutative example illustrating the classical probability analogue.

\section{Preliminaries}

Throughout the paper, we assume that $\mathscr{A}$ is a  unital $C^*$-algebra and $B(\mathcal H)$ is the $C^*$-algebra of all  bounded operators on a Hilbert space $\mathcal H$.

  A linear map  $\Phi:\mathscr A\to B(\mathcal H)$ is called completely positive if, for every $n\geq 1$, the amplified map
\[
    \Phi^{(n)}:\M_n(\mathscr A)\to \M_n(B(\mathcal H)),
    \qquad
    \Phi^{(n)}([A_{ij}])=[\Phi(A_{ij})],
\]
is positive.  We   write  $ \Phi\leq \Psi$,
when $\Psi-\Phi$ is completely positive.

Let $\Theta:\mathscr A\to\mathscr B$ be a unital completely positive map between unital $C^*$-algebras. Its multiplicative domain is
\[
    \operatorname{MD}(\Theta)
    =
    \{X\in\mathscr A:
    \Theta(X^*X)=\Theta(X)^*\Theta(X)
    \text{ and }
    \Theta(XX^*)=\Theta(X)\Theta(X)^*\}.
\]

The celebrated  Stinespring's theorem asserts that if    $\Omega:\mathscr A\to B(\mathcal H)$ is  completely positive, then  there exist a Hilbert space $\mathcal K$, a
unital $*$-representation $\pi:\mathscr A\to B(\mathcal K)$,  and a bounded operator
$ V:\mathcal H\to\mathcal K$ such that
\[
    \Omega(A)=V^*\pi(A)V,
    \qquad A\in\mathscr A.
\]
The triple $(\pi,\mathcal K,V)$ is called a Stinespring representation of $\Omega$. This representation is called  minimal, when
\[
    \mathcal K
    =
    \overline{\operatorname{span}}
    \{\pi(A)V\xi:A\in\mathscr A,\ \xi\in\mathcal H\}.
\]
A minimal Stinespring representation is unique up to unitary equivalence.

The following Radon--Nikodym theorem (see \cite{Arveson,Paulbook}) for completely positive maps is the
basic order-theoretic tool of the paper.

\textbf{Theorem.}  Let $\Omega:\mathscr A\to B(\mathcal H)$ be completely positive, and let $(\pi,\mathcal K,V)$
be a minimal Stinespring representation of $\Omega$. For every completely positive map $\Phi\leq \Omega$, there exists a unique positive contraction $D_\Phi\in  \pi(\mathscr A)'$, such that
\[
    \Phi(A)=V^*\pi(A)D_\Phi V,
    \qquad A\in\mathscr A.
\]
Conversely, every  positive contraction $D_\Phi\in  \pi(\mathscr A)'$ defines a
completely positive map
\[
    \Omega_D:\mathscr A\to B(\mathcal H),\qquad \Omega_D(A)=V^*\pi(A)DV,
    \qquad A\in\mathscr A
\]
and this map satisfies $\Omega_D\leq\Omega$.

Thus the correspondence
\[
    \Phi\longleftrightarrow D_\Phi
\]
is an affine order isomorphism.

The operator $D_\Phi$ is called the Radon--Nikodym derivative of $\Phi$
with respect to $\Omega$, and we write
\[
    D_\Phi=\frac{\mathrm{d}\Phi}{\mathrm{d}\Omega}.
\]
A completely positive map $\Phi\leq \Omega$ is said to be  sharp relative to $\Omega$ if $D_\Phi$ is a projection.

If  $  \Omega:\M_m\to \M_n$  is a   completely positive map between matrix algebras, then there is a well-known Kraus representation \cite{Bhatia} of $\Omega$  given by
\[
    \Omega(B)=\sum_{i=1}^r V_i^* B V_i,
    \qquad B\in \M_m,
\]
for suitable matrices $V_i$. This representation is called minimal if the
Kraus operators $V_1,\ldots,V_r$ are linearly independent.

  \section{Radon--Nikodym pullback under precomposition}
In this section, we construct the pullback transfer map using standard Stinespring representations and commutant lifting. The  underlying non-commutative techniques utilized here are classical. But, we establishes the notation and basic properties required for the defect analysis in next sections.

Let $\Omega:\mathscr B\to B(\mathcal H)$ and  $\Lambda:\mathscr A\to \mathscr B$ be completely positive maps. Fix a minimal Stinespring representation
\[
    \Omega(B)=V^*\pi(B)V,
    \qquad B\in\mathscr B,
\]
on a Hilbert space $\mathcal K_\Omega$. For every positive operator  $ D\in \pi(\mathscr B)'_+$,
define a completely positive map $\Omega_D:\mathscr B\to B(\mathcal H)$ by
\[
    \Omega_D(B)=V^*\pi(B)DV,
    \qquad B\in\mathscr B.
\]
Since  $0\leq D\leq \|D\|I$, we have
\[
    0\leq \Omega_D\leq \|D\|\Omega.
\]
Hence
\begin{align}\label{domi-1}
    0\leq \Omega_D\circ\Lambda
    \leq
    \|D\|(\Omega\circ\Lambda).
\end{align}

\begin{definition}[Pullback transfer map]
Let $ \Omega:\mathscr B\to B(\mathcal H)$ and $ \Lambda:\mathscr A\to \mathscr B$ be completely positive maps. We define
\[
    \mathcal T_\Lambda^\Omega(D)
    =
    \frac{\mathrm{d}(\Omega_D\circ\Lambda)}
         {\mathrm{d}(\Omega\circ\Lambda)},\qquad D\in \pi_\Omega(\mathscr B)'_+,
\]
where the Radon--Nikodym derivative is computed with respect to a
minimal Stinespring representation of $\Omega\circ\Lambda$.
The map
\[
    D\mapsto \mathcal T_\Lambda^\Omega(D)
\]
will be called the pullback transfer map associated with the pair
$(\Omega,\Lambda)$.
\end{definition}
Note that, by definition,
\begin{align}\label{perb-def}
    \Omega_D\circ\Lambda
    =
    (\Omega\circ\Lambda)_{\mathcal T_\Lambda^\Omega(D)}\qquad (D\in \pi_\Omega(\mathscr B)'_+).
\end{align}
Equivalently, if $ 0\leq \Phi\leq \Omega$ and  $D_\Phi=\frac{\mathrm{d}\Phi}{\mathrm{d}\Omega}$, then
\[
    \mathcal T_\Lambda^\Omega(D_\Phi)
    =
    \frac{\mathrm{d}(\Phi\circ\Lambda)}
         {\mathrm{d}(\Omega\circ\Lambda)}.
\]

The following proposition records the basic functorial properties of the
Radon--Nikodym pullback.  These properties are direct consequences of
the Radon--Nikodym theorem for completely positive maps, together with
its matrix amplifications; we include the proof to fix notation and to
make clear how the pullback acts on derivatives.
\begin{proposition}
\label{prop-basic-pullback}
The assignment
\[
    D\in \pi_\Omega(\mathscr B)'_+
    \longmapsto
    \mathcal T_\Lambda^\Omega(D)
    =
    \frac{\mathrm d(\Omega_D\circ\Lambda)}
         {\mathrm d(\Omega\circ\Lambda)}
\]
is additive and positively homogeneous.  Hence it extends uniquely to a
positive linear map
\[
    \mathcal T_\Lambda^\Omega:
    \pi_\Omega(\mathscr B)'
    \longrightarrow
    \pi_{\Omega\circ\Lambda}(\mathscr A)'.
\]
This extension is unital and completely positive.  Moreover, for every
\(0\leq \Phi\leq\Omega\),
\[
    \mathcal T_\Lambda^\Omega
    \left(
        \frac{\mathrm d\Phi}{\mathrm d\Omega}
    \right)
    =
    \frac{\mathrm d(\Phi\circ\Lambda)}
         {\mathrm d(\Omega\circ\Lambda)}.
\]
If \(\Omega\) and \(\Lambda\) are normal maps between von Neumann
algebras and the Stinespring representations are taken in the normal
category, then \(\mathcal T_\Lambda^\Omega\) is normal.
\end{proposition}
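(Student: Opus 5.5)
The plan is to rely on the uniqueness part of the Radon--Nikodym theorem quoted in Section~2, applied to the minimal Stinespring representation $(\pi_{\Omega\circ\Lambda},\mathcal K_{\Omega\circ\Lambda},W)$ of $\Omega\circ\Lambda$. Positive elements will be handled first, and linearity will then be obtained by a standard decomposition.

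\emph{Additivity and homogeneity.} For $D_1,D_2\in\pi_\Omega(\mathscr B)'_+$ and $t\ge 0$, the definition of $\Omega_D$ gives directly
\[
\Omega_{D_1+D_2}\circ\Lambda=\Omega_{D_1}\circ\Lambda+\Omega_{D_2}\circ\Lambda
\qquad\text{and}\qquad
\Omega_{tD}\circ\Lambda=t\,\Omega_D\circ\Lambda .
\]
By \eqref{perb-def}, both sides can be written as $(\Omega\circ\Lambda)_E$ with $E\in\pi_{\Omega\circ\Lambda}(\mathscr A)'_+$. Uniqueness of the derivative then yields $\mathcal T(D_1+D_2)=\mathcal T(D_1)+\mathcal T(D_2)$ and $\mathcal T(tD)=t\mathcal T(D)$.

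Uniqueness is needed for positive elements that are not contractions, since \eqref{domi-1} only gives domination by $\|D\|(\Omega\circ\Lambda)$. The fix is to rescale: set $\mathcal T(D):=\|D\|\,\frac{\mathrm d(\Omega_{D/\|D\|}\circ\Lambda)}{\mathrm d(\Omega\circ\Lambda)}$. The underlying uniqueness statement is simply that $W^*\pi(A)EW=0$ for all $A$ forces $E=0$ when $E$ is in the commutant. This follows from minimality, because
\[
\langle E\pi(A_1)W\xi,\pi(A_2)W\eta\rangle=\langle W^*\pi(A_2^*A_1)EW\xi,\eta\rangle .
\]
The same identity therefore also gives uniqueness for self-adjoint, and indeed arbitrary, $E$ in the commutant.

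\emph{Extension to a positive linear map.} Every element of the von Neumann algebra $\pi_\Omega(\mathscr B)'$ is a combination of four positive elements of the same algebra. For self-adjoint $D=D_1-D_2$, I set $\mathcal T(D)=\mathcal T(D_1)-\mathcal T(D_2)$; additivity makes this independent of the decomposition. Extending complex-linearly gives the unique linear extension. Positivity holds by construction.

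Unitality comes from the observation $\Omega_I\circ\Lambda=\Omega\circ\Lambda=(\Omega\circ\Lambda)_I$, so $\mathcal T(I)=I$. The formula for $\frac{\mathrm d(\Phi\circ\Lambda)}{\mathrm d(\Omega\circ\Lambda)}$ is a restatement of the definition, using $\Phi=\Omega_{D_\Phi}$. A useful consequence of linearity and uniqueness is the characterization that $\mathcal T(D)$ is the unique $E$ in the commutant with
\[
V^*\pi(\Lambda(A))DV=W^*\pi_{\Omega\circ\Lambda}(A)EW
\qquad\text{for all } A\in\mathscr A .
\]

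\emph{Complete positivity.} This is the main step, and I expect it to be the real obstacle. My first approach is the matrix amplification suggested in the text. Take a positive $[D_{ij}]\in \M_n(\pi_\Omega(\mathscr B)')_+=(\pi_\Omega^{(n)}(\M_n(\mathscr B)))'_+$ acting on $\mathcal K_\Omega^n$. Consider the completely positive map
\[
\Omega^{(n)}\circ\Lambda^{(n)}:\M_n(\mathscr A)\to \M_n(B(\mathcal H)).
\]
Its minimal Stinespring representation is $(\pi_{\Omega\circ\Lambda}^{(n)},\mathcal K^n_{\Omega\circ\Lambda},W\otimes I_n)$, and minimality passes to amplifications via matrix units. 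Applying the first part of the argument to the pair $(\Omega^{(n)},\Lambda^{(n)})$ gives a positive map $\mathcal T^{(n\text{-level})}$. Comparing entries against matrix units $A\otimes e_{kl}$ and using the uniqueness characterization above, I would check that $\mathcal T^{(n\text{-level})}([D_{ij}])=[\mathcal T(D_{ij})]$. Hence $\mathcal T^{(n)}$ is positive. The entrywise identification is the delicate part.

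A cleaner alternative, which the later sections likely formalize, is to produce an explicit isometry. Define $U:\mathcal K_{\Omega\circ\Lambda}\to\mathcal K_{\Lambda}\otimes\ldots$, or more simply define $U$ via
\[
U\pi_{\Omega\circ\Lambda}(A)W\xi=\text{(Stinespring dilation of }\pi_\Omega\circ\Lambda)(A)\ldots .
\]
One then shows $\mathcal T(D)=U^*\rho(D)U$ for a normal $*$-homomorphism $\rho$. This gives complete positivity and, in the von Neumann setting, normality at once.

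\emph{Normality.} Without the alternative construction, normality can be argued directly. If $D_\alpha\uparrow D$, then $\Omega_{D_\alpha}\circ\Lambda(A)\to\Omega_D\circ\Lambda(A)$ weakly. The images $\mathcal T(D_\alpha)$ increase and are bounded, so they converge to some $E$. Passing to the limit in the matrix coefficients on the dense set $\{\pi(A)W\xi\}$ identifies $E=\mathcal T(D)$.
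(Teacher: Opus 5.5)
The gap is in complete positivity. Everything else matches the paper: additivity and homogeneity via uniqueness, the linear extension, unitality, the formula for $\Phi\circ\Lambda$, and the monotone-net argument for normality.

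Your main route rests on the identification $\M_n(\pi_\Omega(\mathscr B)')=(\pi_\Omega^{(n)}(\M_n(\mathscr B)))'$, which is false for $n\geq 2$. Since $\pi_\Omega$ is unital, $\pi_\Omega^{(n)}(\M_n(\mathscr B))$ contains the matrix units $I\otimes e_{kl}$, so its commutant is only $\pi_\Omega(\mathscr B)'\otimes I_n$. For the same reason, the minimal representation of $\Omega^{(n)}\circ\Lambda^{(n)}$ on $\M_n(\mathscr A)$ has commutant $\pi_{\Omega\circ\Lambda}(\mathscr A)'\otimes I_n$. Applying the first part to the pair $(\Omega^{(n)},\Lambda^{(n)})$ therefore only yields $D\otimes I_n\mapsto\mathcal T(D)\otimes I_n$. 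A positive $[D_{ij}]$ with nonzero off-diagonal entries is not in that domain, so the entrywise identification you flag as delicate cannot be set up.

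Your fallback, writing $\mathcal T(D)=U^*(\cdot)U$ for a $*$-homomorphism, is the right idea. It is the paper's later compression formula $\mathcal T(D)=Q\Gamma(D)Q|_{\mathcal K_{\Omega,\Lambda}}$. However, you leave it as an ellipsis. Its substantive step is not carried out: showing that $\varphi(A)S\eta\mapsto\varphi(A)SD\eta$ is well defined and bounded on the minimal Stinespring space $\mathcal L$ of $\pi_\Omega\circ\Lambda=S^*\varphi(\cdot)S$. That step uses the fact that $D\otimes I_n$ commutes with the positive block matrix $[\pi_\Omega(\Lambda(A_j^*A_k))]$. Nor do you identify $\mathcal T(D)$ with the compression.

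The repair is to inflate rather than amplify. Keep the domain $\mathscr A$ and use $A\mapsto(\Omega\circ\Lambda)(A)\otimes I_n$. Its minimal Stinespring representation $(\pi_{\Omega\circ\Lambda}(\cdot)\otimes I_n,\ W\otimes I_n)$ has commutant $\M_n(\pi_{\Omega\circ\Lambda}(\mathscr A)')$. For positive $[D_{ij}]\in\M_n(\pi_\Omega(\mathscr B)')$, the operator $[D_{ij}]$ commutes with $\pi_\Omega(\Lambda(A))\otimes I_n$. Hence
\[
\Psi(A)=(V\otimes I_n)^*\big(\pi_\Omega(\Lambda(A))\otimes I_n\big)[D_{ij}](V\otimes I_n)
\]
is completely positive and dominated by $\|[D_{ij}]\|\,(\Omega\circ\Lambda)(\cdot)\otimes I_n$. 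Your characterization $V^*\pi_\Omega(\Lambda(A))DV=W^*\pi_{\Omega\circ\Lambda}(A)\mathcal T(D)W$, applied entry by entry, gives
\[
\Psi(A)=(W\otimes I_n)^*\big(\pi_{\Omega\circ\Lambda}(A)\otimes I_n\big)[\mathcal T(D_{ij})](W\otimes I_n).
\]
Uniqueness of the Radon--Nikodym derivative in $\M_n(\pi_{\Omega\circ\Lambda}(\mathscr A)')$ then forces $[\mathcal T(D_{ij})]$ to equal the positive derivative of $\Psi$. This is exactly the paper's argument.
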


\begin{proof}
We first prove additivity and positive homogeneity on the positive cone.
It follows from  definition of $\Omega_D$ that
\[
    \Omega_{\alpha D+\beta E}
    =
    \alpha\Omega_D+\beta\Omega_E
\]
 for all $ D,E\in\pi(\mathscr B)'_+$ and  $\alpha,\beta\geq0$.
Hence
\[
    \Omega_{\alpha D+\beta E}\circ\Lambda
    =
    \alpha(\Omega_D\circ\Lambda)
    +
    \beta(\Omega_E\circ\Lambda).
\]
The  uniqueness of Radon--Nikodym derivatives then gives
\[
    \mathcal T_\Lambda^\Omega(\alpha D+\beta E)
    =
    \alpha\mathcal T_\Lambda^\Omega(D)
    +
    \beta\mathcal T_\Lambda^\Omega(E).
\]
Since the positive cone linearly generates $\pi(\mathscr B)'$, this additive homogeneous map extends uniquely to   a linear map on
$\pi(\mathscr B)'$. This extension is positive by construction. Moreover, as $ \Omega_I=\Omega$, we have
\[
    \mathcal T_\Lambda^\Omega(I)
    =
    \frac{\mathrm{d}(\Omega\circ\Lambda)}
         {\mathrm{d}(\Omega\circ\Lambda)}
    =
    I.
\]
Thus $\mathcal T_\Lambda^\Omega$ is unital.

2. Complete positivity: We now prove complete positivity. Let
\[
    [D_{ij}]\in \M_n(\pi(\mathscr B)')
\]
be positive. Let
\[
    \widetilde V:\mathcal H^n\to\mathcal K_\Omega^n
\]
be the diagonal amplification of $V$, and let
\[
    \pi^{(n)}(B)
    =
    \operatorname{diag}(\pi(B),\ldots,\pi(B)),\qquad (B\in\mathscr{B}).
\]
Now consider the map $\Psi:\mathscr A\to B(\mathcal H^n)$ defined by
\[
    \Psi(A)
    =
    \widetilde V^*
    \pi^{(n)}(\Lambda(A))
    [D_{ij}]
    \widetilde V.
\]
This is a completely positive map, since  $[D_{ij}]\in \pi^{(n)}(\mathscr B)'$ and so   we can  write
\[
    \Psi(A)
    =
    ([D_{ij}]^{1/2}\widetilde V)^*
    \pi^{(n)}(\Lambda(A))
    ([D_{ij}]^{1/2}\widetilde V).
\]
Also, it follows from
\[
    0\leq [D_{ij}]
    \leq
    \|[D_{ij}]\|I_{\mathcal K_\Omega^n}.
\]
that  for every \(A\in\mathscr A_+\),
\begin{align*}
    0\leq \Psi(A)
   & \leq
    \|[D_{ij}]\|
    \widetilde V^*\pi^{(n)}(\Lambda(A))\widetilde V\\
    & = \|[D_{ij}]\| \operatorname{diag}
    \big(
        (\Omega\circ\Lambda)(A),\ldots,
        (\Omega\circ\Lambda)(A)
    \big)\\
    &=  \|[D_{ij}]\|(\Omega\circ\Lambda)^{(n)},
\end{align*}
 where \((\Omega\circ\Lambda)^{(n)}\) denotes the diagonal amplification of
\(\Omega\circ\Lambda\). Let
\begin{align}\label{stein-ol}
    \Omega\circ\Lambda(A)=W^*\rho(A)W
\end{align}
be a minimal Stinespring representation of $\Omega\circ\Lambda$. Then, applying \eqref{perb-def} gives
 \[
    W^*\rho(A)\mathcal T_\Lambda^\Omega(D)W
    =
    V^*\pi(\Lambda(A))DV.
\]
 for every \(D\in\pi(\mathscr B)'\) and every
\(A\in\mathscr A\).
Applying this to each entry \(D_{ij}\), we obtain
\[
    V^*\pi(\Lambda(A))D_{ij}V
    =
    W^*\rho(A)\mathcal T_\Lambda^\Omega(D_{ij})W
\]
 for every \(A\in\mathscr A\). Consequently, we have
\[
    \Psi(A)
    =
    \big[
        W^*\rho(A)\mathcal T_\Lambda^\Omega(D_{ij})W
    \big]_{i,j=1}^n.
\]
Equivalently, if \(\widetilde W:\mathcal H^n\to\mathcal K^n\) denotes
the diagonal amplification of \(W\), then
\[
    \Psi(A)
    =
    \widetilde W^*\rho^{(n)}(A)
    \big[
        \mathcal T_\Lambda^\Omega(D_{ij})
    \big]_{i,j=1}^n
    \widetilde W.
\]
Thus, by uniqueness in the Radon--Nikodym theorem, the derivative of
\(\Psi\) with respect to \((\Omega\circ\Lambda)^{(n)}\) is
\[
    \big[
        \mathcal T_\Lambda^\Omega(D_{ij})
    \big]_{i,j=1}^n.
\]
Since $\Psi$ is completely positive, this derivative is positive. Hence
\[
    \big[
        \mathcal T_\Lambda^\Omega(D_{ij})
    \big]_{i,j=1}^n
    \geq0.
\]
Therefore $\mathcal T_\Lambda^\Omega$ is completely positive.

3. Normality:  Let
\[
    0\leq D_\alpha\uparrow D
\]
be an increasing bounded net in $\pi(\mathscr B)'$.

For $A\in\mathscr A_+$, the operator $\pi(\Lambda(A))$ is positive and
commutes with each $D_\alpha$. Therefore
\[
    \pi(\Lambda(A))D_\alpha
    =
    \pi(\Lambda(A))^{1/2}D_\alpha\pi(\Lambda(A))^{1/2}
    \uparrow
    \pi(\Lambda(A))^{1/2}D\pi(\Lambda(A))^{1/2}
    =
    \pi(\Lambda(A))D
\]
in the weak operator topology. Hence
\begin{align}\label{net-omega}
    (\Omega_{D_\alpha}\circ\Lambda)(A)
    \uparrow
    (\Omega_D\circ\Lambda)(A).
\end{align}
Set
\[
    E_\alpha=\mathcal T_\Lambda^\Omega(D_\alpha)
    \qquad\text{and}\qquad
    E=\mathcal T_\Lambda^\Omega(D).
\]
Since $\mathcal T_\Lambda^\Omega$ is positive, we have $0\leq E_\alpha\leq E$ and so  $(E_\alpha)$ is an increasing bounded net in the von Neumann algebra
$\rho(\mathscr A)'$. Put $ F=\sup_\alpha E_\alpha$.  Note that $F$ and the $E_\alpha$ commute with $\rho(A)$ for  every $A\in\mathscr A_+$ so
\begin{align*}
    W^*\rho(A)FW
    &=
    \sup_\alpha W^*\rho(A)E_\alpha W\\
    &=\sup_\alpha W^*\rho(A)\mathcal T_\Lambda^\Omega(D_\alpha)  W\\
    &= \sup_\alpha
    (\Omega_{D_\alpha}\circ\Lambda)(A)\qquad (\text{by \eqref{perb-def} and \eqref{stein-ol}})\\
    &=
    (\Omega_D\circ\Lambda)(A)  \qquad (\text{by \eqref{net-omega}})\\
    &=
    W^*\rho(A)EW.
\end{align*}
By uniqueness of the Radon--Nikodym derivative in the minimal
Stinespring representation of $\Omega\circ\Lambda$, we get $F=E$, meaning that
\[
    \mathcal T_\Lambda^\Omega(D_\alpha)
    \uparrow
    \mathcal T_\Lambda^\Omega(D).
\]
Thus $\mathcal T_\Lambda^\Omega$ is normal.

4. If $0\leq \Phi\leq\Omega$ and $ D_\Phi=\frac{\mathrm{d}\Phi}{\mathrm{d}\Omega}$,
then $    \Phi=\Omega_{D_\Phi}$. Consequently,
\[
    \Phi\circ\Lambda
    =
    \Omega_{D_\Phi}\circ\Lambda,
\]
and this gives the desired identity
\[
    \mathcal T_\Lambda^\Omega(D_\Phi)
    =
    \frac{\mathrm{d}(\Phi\circ\Lambda)}
         {\mathrm{d}(\Omega\circ\Lambda)}.
\]
\end{proof}


  \subsection{The compression formula}

We next relate the pullback transfer map to a standard Stinespring
compression.  Start with a minimal Stinespring representation
\[
    \Omega(B)=V^*\pi(B)V .
\]
Applying Stinespring's theorem to the completely positive map
\(\pi\circ\Lambda\) gives a larger representation space.  Inside this
space one finds the minimal Stinespring space for the composite map
\(\Omega\circ\Lambda\).  The transfer map
\(\mathcal T_\Lambda^\Omega\) is obtained by compressing the natural
commutant lift to this smaller space.

  Let
\[
    \Omega(B)=V^*\pi(B)V,
    \qquad B\in\mathscr B,
\]
be the fixed minimal Stinespring representation of \(\Omega\) on
\(\mathcal K_\Omega\).  Consider the completely positive map
\[
    \pi\circ\Lambda:\mathscr A\to B(\mathcal K_\Omega).
\]
Let
\[
    \pi(\Lambda(A))=S^*\varphi(A)S,
    \qquad A\in\mathscr A,
\]
be a minimal Stinespring representation of \(\pi\circ\Lambda\) on a
Hilbert space \(\mathcal L\).  Then
\[
    \Omega\circ\Lambda(A)
    =
    (SV)^*\varphi(A)(SV),
    \qquad A\in\mathscr A.
\]

Set
\[
    \mathcal K_{\Omega,\Lambda}
    =
    \overline{\operatorname{span}}
    \{\varphi(A)SV\xi:A\in\mathscr A,\ \xi\in\mathcal H\}
    \subseteq \mathcal L .
\]
This subspace reduces \(\varphi(\mathscr A)\).  Indeed, for
\(C,A\in\mathscr A\) and \(\xi\in\mathcal H\),
\[
    \varphi(C)\varphi(A)SV\xi
    =
    \varphi(CA)SV\xi,
    \qquad
    \varphi(C)^*\varphi(A)SV\xi
    =
    \varphi(C^*A)SV\xi.
\]
Thus the restriction
\[
    \rho(A)=\varphi(A)|_{\mathcal K_{\Omega,\Lambda}}
\]
defines a \(*\)-representation.  With \(W=SV\), the triple
\[
    (\rho,W,\mathcal K_{\Omega,\Lambda})
\]
is a minimal Stinespring representation of \(\Omega\circ\Lambda\).

The following  is a standard application of Stinespring dilation theory; we include the proof to fix the notation used in our compression formulas
  \begin{lemma}\label{lift-com}
For every \(D\in\pi(\mathscr B)'\), there is a unique operator
\(\Gamma_\Lambda^\Omega(D)\in\varphi(\mathscr A)'\) such that
\[
    \Gamma_\Lambda^\Omega(D)\varphi(A)S\eta
    =
    \varphi(A)SD\eta,
    \qquad
    A\in\mathscr A,\ \eta\in\mathcal K_\Omega .
\]
The map
\[
    \Gamma_\Lambda^\Omega:
    \pi(\mathscr B)'
    \longrightarrow
    \varphi(\mathscr A)'
\]
is a unital \(*\)-homomorphism.
\end{lemma}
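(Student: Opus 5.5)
The plan is to define \(\Gamma_\Lambda^\Omega(D)\) on the dense subspace
\[
\mathcal L_0=\operatorname{span}\{\varphi(A)S\eta : A\in\mathscr A,\ \eta\in\mathcal K_\Omega\}.
\]
Minimality of \((\varphi,S,\mathcal L)\) makes \(\mathcal L_0\) dense in \(\mathcal L\). The core of the argument is a norm estimate showing the formula is well defined and bounded; the algebraic properties are then verified on \(\mathcal L_0\) and extended by density.

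\textbf{Step 1: well-definedness and boundedness.} Fix \(D\in\pi(\mathscr B)'\). Take a finite sum \(x=\sum_k\varphi(A_k)S\eta_k\) and set \(y=\sum_k\varphi(A_k)SD\eta_k\). Then
\[
\|y\|^2=\sum_{k,l}\langle S^*\varphi(A_l^*A_k)S D\eta_k, D\eta_l\rangle
=\sum_{k,l}\langle \pi(\Lambda(A_l^*A_k))D\eta_k,D\eta_l\rangle .
\]
Because \(D\) commutes with \(\pi(\Lambda(\cdot))\), this equals
\[
\langle [\pi(\Lambda(A_l^*A_k))]\,D^{(n)}\boldsymbol\eta,\ D^{(n)}\boldsymbol\eta\rangle,
\]
where \(D^{(n)}\) is the diagonal amplification of \(D\). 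The matrix \(P=[\pi(\Lambda(A_l^*A_k))]_{l,k}\) is positive, since \(\pi\circ\Lambda\) is completely positive. Moreover \(P\) commutes with \(D^{(n)}\) and with \(D^{*(n)}\), because \(\pi(\mathscr B)'\) is self-adjoint. Hence
\[
\langle P D^{(n)}\boldsymbol\eta, D^{(n)}\boldsymbol\eta\rangle
=\|P^{1/2}D^{(n)}\boldsymbol\eta\|^2
=\|D^{(n)}P^{1/2}\boldsymbol\eta\|^2
\le \|D\|^2\langle P\boldsymbol\eta,\boldsymbol\eta\rangle=\|D\|^2\|x\|^2 .
\]
So \(x\mapsto y\) is well defined: if \(x=0\) then \(y=0\). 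It is also bounded with norm at most \(\|D\|\), so it extends uniquely to \(\mathcal L\). Uniqueness of \(\Gamma_\Lambda^\Omega(D)\) follows from the density of \(\mathcal L_0\). This estimate is the only real obstacle; the key point is commuting \(D^{(n)}\) past \(P^{1/2}\), which is legitimate since \(P^{1/2}\) lies in the von Neumann algebra generated by \(P\).

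\textbf{Step 2: commutant membership.} For \(C\in\mathscr A\), on generators we have
\[
\Gamma(D)\varphi(C)\varphi(A)S\eta=\varphi(CA)SD\eta=\varphi(C)\Gamma(D)\varphi(A)S\eta .
\]
By density, \(\Gamma(D)\in\varphi(\mathscr A)'\).

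\textbf{Step 3: homomorphism and unit.} Linearity and \(\Gamma(I)=I\) are immediate on generators. For multiplicativity, on generators
\[
\Gamma(D)\Gamma(E)\varphi(A)S\eta=\Gamma(D)\varphi(A)S(E\eta)=\varphi(A)SDE\eta=\Gamma(DE)\varphi(A)S\eta .
\]

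\textbf{Step 4: the \(*\)-property.} It suffices to check \(\langle\Gamma(D)x,x'\rangle=\langle x,\Gamma(D^*)x'\rangle\) on generators \(x=\varphi(A)S\eta\), \(x'=\varphi(A')S\eta'\). Both sides reduce to \(\langle\pi(\Lambda(A'^*A))D\eta,\eta'\rangle\), once more using that \(D\) and \(D^*\) commute with \(\pi(\Lambda(\cdot))\). Hence \(\Gamma(D)^*=\Gamma(D^*)\), and \(\Gamma_\Lambda^\Omega\) is a unital \(*\)-homomorphism.
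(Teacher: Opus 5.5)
Your proposal is correct and follows essentially the same route as the paper: define $\Gamma_\Lambda^\Omega(D)$ on the dense span of the vectors $\varphi(A)S\eta$, get well-definedness and the bound $\|\Gamma_\Lambda^\Omega(D)\|\le\|D\|$ from positivity of $P=[\pi(\Lambda(A_j^*A_k))]$ and its commutation with $D^{(n)}$, then verify commutant membership, multiplicativity and $\Gamma_\Lambda^\Omega(D)^*=\Gamma_\Lambda^\Omega(D^*)$ on generators. Your step $\|P^{1/2}D^{(n)}\eta\|^2=\|D^{(n)}P^{1/2}\eta\|^2$ is the vector form of the paper's inequality $D^{(n)*}PD^{(n)}=P^{1/2}D^{(n)*}D^{(n)}P^{1/2}\le\|D\|^2P$, and it only needs $D^{(n)}$ to commute with $P$, not with $D^{*(n)}$; your explicit generator checks also fill in what the paper calls ``clear''.
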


\begin{proof}
Since $(\varphi,S,\mathcal L)$ is a minimal Stinespring representation of $\pi\circ \Lambda$, the subspace \[
    \operatorname{span}
    \{\varphi(A)S\eta:A\in\mathscr A,\ \eta\in\mathcal K_\Omega\}
\]
is dense in $\mathcal L$.  We define   the operator
\[
    \Gamma_\Lambda^\Omega(D)
    \left(
        \sum_{k=1}^n\varphi(A_k)S\eta_k
    \right)
    =
    \sum_{k=1}^n\varphi(A_k)SD\eta_k
\]
on this subspace. We show that this is well-defined and bounded. If $
    A_1,\ldots,A_n\in\mathscr A$,
then $[A_j^*A_k]_{j,k=1}^n$ is positive. Since $\pi\circ\Lambda$ is completely positive,  we have
\[
   0\leq  P=
    [\pi(\Lambda(A_j^*A_k))]_{j,k=1}^n
    \in B(\mathcal K_\Omega^n).
\]
Since $D\in \pi(\mathscr B)'$, it commutes   with every $\pi(\Lambda(A_j^*A_k))$. Consequently, the operator $D^{(n)}=\operatorname{diag}(D,\ldots,D)$ commutes with $P$. Therefore
\begin{align}\label{nrm-1}
    D^{(n)*}PD^{(n)}
    =
    P^{1/2}D^{(n)*}D^{(n)}P^{1/2}
    \leq
    \|D\|^2P.
\end{align}
Writing \(\eta=(\eta_1,\ldots,\eta_n)\), we have
\[
\begin{aligned}
\left\|
    \sum_{k=1}^n\varphi(A_k)SD\eta_k
\right\|^2
&=
\sum_{j,k=1}^n
\left\langle
    \varphi(A_j)SD\eta_j,
    \varphi(A_k)SD\eta_k
\right\rangle  \\
&=\sum_{j,k=1}^n
\left\langle
     D\eta_j,
    S^*\varphi(A_j^*A_k)SD\eta_k
\right\rangle \\
&=
\sum_{j,k=1}^n
\left\langle
    D\eta_j,
    \pi(\Lambda(A_j^*A_k))D\eta_k
\right\rangle \\
&\qquad\qquad\qquad (\text{since $(\varphi,S,\mathcal{L})$ is a Stinespring representation of $\pi\circ\Lambda$}) \\
&=
\left\langle
    D^{(n)}\eta,
    P D^{(n)}\eta
\right\rangle \\
    &=
    \left\langle
        \eta,
        D^{(n)*}PD^{(n)}\eta
    \right\rangle                                      \\
    &\leq
    \|D\|^2
    \left\langle
        \eta,
        P\eta
    \right\rangle\qquad\qquad (\text{by \eqref{nrm-1}})                                      \\
    &=
    \|D\|^2
    \left\|
        \sum_{k=1}^n\varphi(A_k)S\eta_k
    \right\|^2.
\end{aligned}
\]
Hence the densely defined operator is bounded with norm at most
$\|D\|$. In particular, it is well-defined and extends uniquely to a
bounded operator on $\mathcal L$. Moreover, by defining formula of  $\Gamma_\Lambda^\Omega(D)$, we have
\[
    \Gamma_\Lambda^\Omega(D)\sigma(A)
    =
    \sigma(A)\Gamma_\Lambda^\Omega(D),
    \qquad A\in\mathscr A.
\]
This guarantees $\Gamma_\Lambda^\Omega(D)\in\sigma(\mathscr A)'$.  In addition, we have clearly  $    \Gamma_\Lambda^\Omega(I)=I$ and
\[
    \Gamma_\Lambda^\Omega(D_1D_2)
    =
    \Gamma_\Lambda^\Omega(D_1)\Gamma_\Lambda^\Omega(D_2).
\]
To complete the proof, we need to show that
\[
    \Gamma_\Lambda^\Omega(D^*)=\Gamma_\Lambda^\Omega(D)^*\qquad D\in\pi(\mathscr B)'.
\]
Indeed, for $A,B\in\mathscr A$ and
$\eta,\zeta\in\mathcal K_\Omega$, we have
\[
\begin{aligned}
    \left\langle
        \Gamma_\Lambda^\Omega(D)\varphi(A)S\eta,
        \varphi(B)S\zeta
    \right\rangle       & =
    \left\langle
        \varphi(A)SD\eta,
        \varphi(B)S\zeta
    \right\rangle                                      \\
    &\quad =
    \left\langle
        D\eta,
        S^*\varphi(A^*B)S\zeta
    \right\rangle                                      \\
    &\quad =
    \left\langle
        D\eta,
        \pi(\Lambda(A^*B))\zeta
    \right\rangle   \qquad(\text {since $S^*\varphi(\cdot)S=\pi\circ\Lambda(\cdot)$})                                     \\
    &\quad =
    \left\langle
        \eta,
        \pi(\Lambda(A^*B))D^*\zeta
    \right\rangle   \quad(\text {since $D$ commutes with each $\pi(\Lambda(X))$})                                   \\
    &\quad =
    \left\langle
        \varphi(A)S\eta,
        \varphi(B)SD^*\zeta
    \right\rangle                                      \\
    &\quad =
    \left\langle
        \varphi(A)S\eta,
        \Gamma_\Lambda^\Omega(D^*)\varphi(B)S\zeta
    \right\rangle .
\end{aligned}
\]
Since vectors of the form $\varphi(A)S\eta$ are dense in $\mathcal L$,
we obtain
\[
    \Gamma_\Lambda^\Omega(D)^*
    =
    \Gamma_\Lambda^\Omega(D^*).
\]
Thus $\Gamma_\Lambda^\Omega$ is a unital $*$-homomorphism.
\end{proof}


The following theorem expresses the pullback transfer map as a compression of the $*$-homomorphic lift $\Gamma_\Lambda^\Omega$.

\begin{theorem}
Let \(\Omega\), \(\Lambda\), and
\(\Gamma_\Lambda^\Omega\) be as in Lemma~\ref{lift-com}. Then the
pullback transfer map is obtained by compressing the
\(*\)-homomorphic lift \(\Gamma_\Lambda^\Omega\) to the minimal
Stinespring space of \(\Omega\circ\Lambda\). More precisely, for every
\(D\in\pi(\mathscr B)'\),
\begin{align}\label{compres-formul}
    \mathcal T_\Lambda^\Omega(D)
    =
    Q_{\Omega,\Lambda}
    \Gamma_\Lambda^\Omega(D)
    Q_{\Omega,\Lambda}
    \big|_{\mathcal K_{\Omega,\Lambda}},
\end{align}
where \(Q_{\Omega,\Lambda}\) denotes the orthogonal projection from
\(\mathcal L\) onto \(\mathcal K_{\Omega,\Lambda}\).
\end{theorem}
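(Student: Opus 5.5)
The plan is to check that the compressed operator $E:=Q_{\Omega,\Lambda}\Gamma_\Lambda^\Omega(D)Q_{\Omega,\Lambda}|_{\mathcal K_{\Omega,\Lambda}}$ has the two properties that characterize $\mathcal T_\Lambda^\Omega(D)$ by uniqueness in the Radon--Nikodym theorem. These are: $E$ lies in $\rho(\mathscr A)'$, and $W^*\rho(A)EW=V^*\pi(\Lambda(A))DV$ for all $A\in\mathscr A$. It suffices to treat $D\in\pi(\mathscr B)'_+$, since both sides of \eqref{compres-formul} are linear in $D$ (Proposition~\ref{prop-basic-pullback}). Uniqueness is then applied in the minimal representation $(\rho,W,\mathcal K_{\Omega,\Lambda})$, where $W=SV$.

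\emph{Commutation.} Since $\mathcal K_{\Omega,\Lambda}$ reduces $\varphi(\mathscr A)$, the projection $Q_{\Omega,\Lambda}$ lies in $\varphi(\mathscr A)'$. By Lemma~\ref{lift-com}, $\Gamma_\Lambda^\Omega(D)$ also lies in $\varphi(\mathscr A)'$. Hence $Q_{\Omega,\Lambda}\Gamma_\Lambda^\Omega(D)Q_{\Omega,\Lambda}$ commutes with every $\varphi(A)$. Restricting to $\mathcal K_{\Omega,\Lambda}$ shows $E\rho(A)=\rho(A)E$. If $D\ge0$, then $\Gamma_\Lambda^\Omega(D)\ge0$ because $\Gamma_\Lambda^\Omega$ is a $*$-homomorphism, so $E\ge0$, and $\|E\|\le\|D\|$.

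\emph{Identity.} Since $W\xi=SV\xi\in\mathcal K_{\Omega,\Lambda}$ (take $A=1$), we have $Q_{\Omega,\Lambda}W=W$. Therefore
\[
W^*\rho(A)EW=(SV)^*\varphi(A)\Gamma_\Lambda^\Omega(D)SV .
\]
Apply the defining relation of Lemma~\ref{lift-com} with the unit $1\in\mathscr A$ and $\eta=V\xi$. Since $\varphi$ is unital, this gives $\Gamma_\Lambda^\Omega(D)SV\xi=SDV\xi$. Hence the expression becomes $V^*S^*\varphi(A)SDV=V^*\pi(\Lambda(A))DV=(\Omega_D\circ\Lambda)(A)$. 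By \eqref{perb-def}, this also equals $W^*\rho(A)\mathcal T_\Lambda^\Omega(D)W$.

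\emph{Conclusion.} Both $E$ and $\mathcal T_\Lambda^\Omega(D)$ are positive elements of $\rho(\mathscr A)'$ inducing the same map $\Omega_D\circ\Lambda$. Uniqueness in the Radon--Nikodym theorem therefore gives $E=\mathcal T_\Lambda^\Omega(D)$. Uniqueness is stated for contractions, so first rescale $D$ to have $\|D\|\le 1$; alternatively, note that $W^*\rho(A)XW=0$ for all $A$ forces $X=0$ on the dense set spanned by $\rho(A)W\xi$, because $X$ commutes with $\rho$. The only points needing care are this uniqueness step and the observation that $\varphi$ and $\rho$ are unital, which the Stinespring setup guarantees. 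No real obstacle is expected.
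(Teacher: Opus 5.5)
Your proposal is correct and follows essentially the same route as the paper. Like the paper, you reduce to positive $D$, show that the compression $E$ is a positive element of $\rho(\mathscr A)'$, compute $W^*\rho(A)EW=(\Omega_D\circ\Lambda)(A)$ using $Q_{\Omega,\Lambda}W=W$ and the defining relation of $\Gamma_\Lambda^\Omega$, and conclude by Radon--Nikodym uniqueness. Your explicit treatment of uniqueness for non-contractive $D$ (rescaling, or the density argument in the minimal space) fills in a detail the paper leaves implicit.
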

\begin{proof}
Since both sides of \eqref{compres-formul} are linear in $D$, we only need  to prove this formula for $D\in\pi(\mathscr B)'_+$.
Put
\[
    E=
    Q_{\Omega,\Lambda}
    \Gamma_\Lambda^\Omega(D)
    Q_{\Omega,\Lambda}
    \big|_{\mathcal K_{\Omega,\Lambda}} .
\]
Since $Q_{\Omega,\Lambda}$ and  $\Gamma_\Lambda^\Omega(D)$ both commute with $\varphi(A)$, their compression $E$ commutes with the restricted representation $\rho(A)=\varphi(A)|_{\mathcal K_{\Omega,\Lambda}}$.
Moreover, if $D\geq0$, then $\Gamma_\Lambda^\Omega(D)\geq0$, and hence $E\geq0$.
For every $A\in\mathscr A$ we have
\begin{align}\label{rep-w}
    W^*\rho(A)EW
    &=
    (SV)^*
    \varphi(A)
    Q_{\Omega,\Lambda}
    \Gamma_\Lambda^\Omega(D)
    Q_{\Omega,\Lambda}
    SV .
\end{align}
It follows from definition of $\mathcal K_{\Omega,\Lambda}$ that  $ SV\mathcal H\subseteq \mathcal K_{\Omega,\Lambda}$, and so
\begin{align}\label{q-commu}
 Q_{\Omega,\Lambda}SV=SV\qquad\text{and}\qquad (SV)^*Q_{\Omega,\Lambda}=(SV)^*.
\end{align}
Since \(Q_{\Omega,\Lambda}\) commutes with \(\varphi(A)\), this implies that
 \begin{align}\label{sv-3}
    (SV)^*\varphi(A)Q_{\Omega,\Lambda}
    =
    (SV)^*Q_{\Omega,\Lambda}\varphi(A)
    =
    (SV)^*\varphi(A).
\end{align}
Therefore
\begin{align*}
    W^*\rho(A)EW
    &=
    (SV)^*\varphi(A)\Gamma_\Lambda^\Omega(D)SV  \qquad(\text{by \eqref{rep-w},\eqref{q-commu} and \eqref{sv-3}})      \\
    &=
    (SV)^*\varphi(A)SDV \qquad(\text{by definition of $\Gamma_\Lambda^\Omega$ })  \\
    &=
    V^*S^*\varphi(A)SDV  \\
    &=
    V^*\pi(\Lambda(A))DV \qquad (\text{since $(\varphi,S,\mathcal{L})$ is a Stinespring representation of $\pi\circ\Lambda$})  \\
    &=
    (\Omega_D\circ\Lambda)(A).
\end{align*}
Thus $E$ is the Radon--Nikodym derivative of
$\Omega_D\circ\Lambda$ with respect to $\Omega\circ\Lambda$. Hence
\[
    E=\mathcal T_\Lambda^\Omega(D),
\]
as required.
\end{proof}


\subsection{The multiplicative defect}

We keep the notation of the previous subsection and write
\[
    \mathcal T=\mathcal T_\Lambda^\Omega,
    \qquad
    \Gamma=\Gamma_\Lambda^\Omega,
    \qquad
    Q=Q_{\Omega,\Lambda}.
\]
Thus
\[
    \mathcal T(D)=Q\Gamma(D)Q|_{Q\mathcal L},
    \qquad D\in\pi(\mathscr B)'.
\]

To measure the deviation of this compression from multiplicativity, we apply standard multiplicative-domain theory. For a unital completely positive map, equality in the Schwarz inequality determines the largest $C^*$-subalgebra on which the map is multiplicative; see, for example, \cite{Choi,Paulbook,Rahaman}.

By making the loss of multiplicativity explicit, the standard defect identities allow us to translate the order-theoretic property of sharpness into a geometric subspace reduction condition, which is  the foundation for the finite-dimensional Kraus relations in next section.

The next lemma details the compression defect identity for this particular pullback map.

\begin{lemma}\label{def-lem}
With notations as above,
\begin{align}\label{defect-1}
    \mathcal T(D^*D)-\mathcal T(D)^*\mathcal T(D)
    =
    Q\Gamma(D)^*(I-Q)\Gamma(D)Q
    \big|_{Q\mathcal L}
\end{align}
for every  $D\in\pi(\mathscr B)'$.
Similarly,
\begin{align}\label{defect-2}
    \mathcal T(DD^*)-\mathcal T(D)\mathcal T(D)^*
    =
    Q\Gamma(D)(I-Q)\Gamma(D)^*Q
    \big|_{Q\mathcal L}.
\end{align}
\end{lemma}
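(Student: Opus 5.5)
The plan is to derive both identities directly from the compression formula \eqref{compres-formul} together with the fact, established in Lemma~\ref{lift-com}, that $\Gamma$ is a unital $*$-homomorphism. Before computing anything I will fix an interpretation. Regard $\mathcal T(D)$ as the operator $Q\Gamma(D)Q$ acting on $Q\mathcal L=\mathcal K_{\Omega,\Lambda}$. Products and adjoints of such compressions are then computed inside $B(\mathcal L)$ and afterwards restricted to $Q\mathcal L$. This is legitimate because $Q$ acts as the identity on $Q\mathcal L$. Consequently $\mathcal T(D)^*=Q\Gamma(D)^*Q|_{Q\mathcal L}$ and $\mathcal T(D)^*\mathcal T(D)=Q\Gamma(D)^*Q\Gamma(D)Q|_{Q\mathcal L}$. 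For the adjoint I use $\Gamma(D)^*=\Gamma(D^*)$, which holds since $\Gamma$ is a $*$-homomorphism.

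For \eqref{defect-1} I apply multiplicativity and $*$-preservation of $\Gamma$ to get $\Gamma(D^*D)=\Gamma(D)^*\Gamma(D)$. Hence
\[
\mathcal T(D^*D)=Q\Gamma(D)^*\Gamma(D)Q\big|_{Q\mathcal L}
=Q\Gamma(D)^*\bigl(Q+(I-Q)\bigr)\Gamma(D)Q\big|_{Q\mathcal L}.
\]
The summand containing $Q$ is exactly $\mathcal T(D)^*\mathcal T(D)$, by the computation in the first paragraph. Subtracting it leaves $Q\Gamma(D)^*(I-Q)\Gamma(D)Q|_{Q\mathcal L}$, which is the claimed identity.

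For \eqref{defect-2} the computation is the same with $D$ replaced by $D^*$. Here $\Gamma(DD^*)=\Gamma(D)\Gamma(D)^*$, and inserting $I=Q+(I-Q)$ between the two factors gives the result.

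The only step needing care is justifying that $\mathcal T(D)^*$, computed in $B(Q\mathcal L)$, agrees with the compression of $\Gamma(D)^*$. This follows because $Q$ is an orthogonal projection, so for $\xi,\zeta\in Q\mathcal L$ we have $\langle Q\Gamma(D)Q\xi,\zeta\rangle=\langle \xi,Q\Gamma(D)^*Q\zeta\rangle$. Beyond this point, everything is routine bookkeeping.
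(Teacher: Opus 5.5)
Your proposal is correct and follows the paper's proof: both apply the compression formula and the $*$-homomorphism property of $\Gamma$ to write $\mathcal T(D^*D)=Q\Gamma(D)^*\Gamma(D)Q|_{Q\mathcal L}$ and $\mathcal T(D)^*\mathcal T(D)=Q\Gamma(D)^*Q\Gamma(D)Q|_{Q\mathcal L}$, subtract the two, and get the second identity by replacing $D$ with $D^*$. Your explicit check that the adjoint of the compression is the compression of $\Gamma(D)^*$ is a detail the paper leaves implicit.
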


\begin{proof}
Using the   formula \eqref{compres-formul} and the fact that $\Gamma$ is a
$*$-homomorphism, we have
\begin{align}\label{qt-1}
    \mathcal T(D^*D)
    =
    Q\Gamma(D^*D)Q|_{Q\mathcal L}  =
    Q\Gamma(D)^*\Gamma(D)Q|_{Q\mathcal L}.
\end{align}
On the other hand,
\begin{align}\label{qt-2}
    \mathcal T(D)^*\mathcal T(D)
    =
    Q\Gamma(D)^*Q\Gamma(D)Q|_{Q\mathcal L}.
\end{align}
Subtracting \eqref{qt-2} from \eqref{qt-1} proves \eqref{defect-1}. The formula  \eqref{defect-2}
  follows by applying the  \eqref{defect-1}  to $D^*$.
\end{proof}

The preceding defect identities allow us to explicitly characterize the multiplicative domain of the pullback map.
\begin{theorem}\label{th-loss}
Let  $D\in\pi(\mathscr B)'$. Then  $ D\in\operatorname{MD}(\mathcal T_\Lambda^\Omega)$ if and only if
    $$\Gamma_\Lambda^\Omega(D)Q_{\Omega,\Lambda}
    =
    Q_{\Omega,\Lambda}\Gamma_\Lambda^\Omega(D).$$
Consequently,
\[
   \mathrm{MD}(\mathcal T_\Lambda^\Omega)
    =
    \{D\in\pi(\mathscr B)':
    \Gamma_\Lambda^\Omega(D)Q_{\Omega,\Lambda}
    =
    Q_{\Omega,\Lambda}\Gamma_\Lambda^\Omega(D)\}.
\]
\end{theorem}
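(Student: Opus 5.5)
The plan is to read both defect identities of Lemma~\ref{def-lem} as statements about the vanishing of a positive operator of the form $X^*X$. Put $X=(I-Q)\Gamma(D)Q$, an operator on $\mathcal L$. Then \eqref{defect-1} reads
\[
    \mathcal T(D^*D)-\mathcal T(D)^*\mathcal T(D)=X^*X\big|_{Q\mathcal L},
\]
since $Q\Gamma(D)^*(I-Q)\Gamma(D)Q=X^*X$ because $I-Q$ is a projection. Moreover $X^*X$ vanishes on $(I-Q)\mathcal L$, because $X$ ends with $Q$. Hence the left side of \eqref{defect-1} is zero if and only if $X^*X=0$ on all of $\mathcal L$, that is, if and only if $X=0$, i.e. $(I-Q)\Gamma(D)Q=0$. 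In the same way, \eqref{defect-2} vanishes if and only if $Y=(I-Q)\Gamma(D)^*Q=0$. Here we use that $\Gamma(D)^*=\Gamma(D^*)$ by Lemma~\ref{lift-com}.

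By definition, $D\in\operatorname{MD}(\mathcal T)$ means both Schwarz equalities hold. This is therefore equivalent to the two conditions $(I-Q)\Gamma(D)Q=0$ and $(I-Q)\Gamma(D)^*Q=0$. Taking adjoints, the second condition is $Q\Gamma(D)(I-Q)=0$. The first gives $\Gamma(D)Q=Q\Gamma(D)Q$, and the second gives $Q\Gamma(D)=Q\Gamma(D)Q$. Together they yield $\Gamma(D)Q=Q\Gamma(D)$.

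Conversely, if $\Gamma(D)$ commutes with $Q$, then $(I-Q)\Gamma(D)Q=(I-Q)Q\Gamma(D)=0$. Likewise, $\Gamma(D)^*=\Gamma(D^*)$ also commutes with $Q$, since $Q$ is self-adjoint; taking adjoints of $\Gamma(D)Q=Q\Gamma(D)$ gives this. So both defects vanish. The displayed description of $\operatorname{MD}(\mathcal T_\Lambda^\Omega)$ is then just a restatement of this equivalence.

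There is no real obstacle; the only care needed is the restriction to $Q\mathcal L$. The key point is that an operator of the form $QZQ$ restricted to $Q\mathcal L$ is zero exactly when $QZQ=0$ on $\mathcal L$, since it is already zero on the complement. One also needs $\|Xh\|^2=\langle X^*Xh,h\rangle$ to pass from $X^*X=0$ to $X=0$.
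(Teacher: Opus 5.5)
Your proposal is correct and follows essentially the same route as the paper: both arguments write the two Schwarz defects from Lemma~\ref{def-lem} as $X^*X$ and $Y^*Y$, with $X=(I-Q)\Gamma(D)Q$ and $Y=(I-Q)\Gamma(D)^*Q$, and conclude that both vanish exactly when $\Gamma(D)Q=Q\Gamma(D)$. Your explicit treatment of the restriction to $Q\mathcal L$ and of the converse direction spells out steps the paper leaves implicit.
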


\begin{proof}
By definition, $D\in\operatorname{MD}(\mathcal T)$ if and only if
\[
    \mathcal T(D^*D)=\mathcal T(D)^*\mathcal T(D)\qquad \text{and}\qquad
    \mathcal T(DD^*)=\mathcal T(D)\mathcal T(D)^*.
\]
By the Lemma~\ref{def-lem}, these are    equivalent to
\begin{align}\label{qd-1}
    Q\Gamma(D)^*(I-Q)\Gamma(D)Q=0 \qquad \text{and}\qquad (I-Q)\Gamma(D)^*Q=0.
\end{align}
Since
\[
    Q\Gamma(D)^*(I-Q)\Gamma(D)Q
    =
    \big((I-Q)\Gamma(D)Q\big)^*
    \big((I-Q)\Gamma(D)Q\big),
\]
\eqref{qd-1} are further  equivalent to
\begin{align}\label{qd-1}
    (I-Q)\Gamma(D)Q=0\qquad\text{and}\qquad   Q\Gamma(D)(I-Q)=0.
\end{align}
These two conditions say exactly that
\[
    \Gamma(D)Q=Q\Gamma(D).
\]
This proves the claim.
\end{proof}
Therefore, the multiplicative domain of the pullback map has a natural geometric characterization: it is the largest $C^*$-subalgebra of $\pi(\mathscr B)'$ whose lifted image reduces the minimal Stinespring space $\mathcal K_{\Omega,\Lambda}$. In operator-theoretic terms, it identifies the part of the Radon--Nikodym commutant on which precomposition by $\Lambda$ preserves multiplicativity.

Since a sharp submap corresponds to a projection-valued derivative, Theorem 4.2 directly implies the following structural criterion for sharpness preservation
\begin{corollary}
Let $ P\in\pi(\mathscr B)'$ be a projection. Then $\mathcal T_\Lambda^\Omega(P)$ is a projection if and only if
\[
    \Gamma_\Lambda^\Omega(P)Q_{\Omega,\Lambda}
    =
    Q_{\Omega,\Lambda}\Gamma_\Lambda^\Omega(P).
\]
Equivalently, a sharp submap
\[
    \Omega_P\leq\Omega
\]
remains sharp after pullback by $\Lambda$ if and only if the lifted
projection $\Gamma_\Lambda^\Omega(P)$ reduces
$\mathcal K_{\Omega,\Lambda}$.
\end{corollary}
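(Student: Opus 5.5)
The plan is to reduce the corollary to Theorem~\ref{th-loss} by showing that, for a projection $P\in\pi(\mathscr B)'$, the operator $\mathcal T(P)$ is a projection exactly when $P\in\operatorname{MD}(\mathcal T)$. Here we write $\mathcal T=\mathcal T_\Lambda^\Omega$, $\Gamma=\Gamma_\Lambda^\Omega$ and $Q=Q_{\Omega,\Lambda}$. Since $\mathcal T$ is unital and completely positive (Proposition~\ref{prop-basic-pullback}), it is in particular $*$-preserving. Hence $\mathcal T(P)$ is self-adjoint, with $0\leq\mathcal T(P)\leq\mathcal T(I)=I$.

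For the forward direction, suppose $\mathcal T(P)$ is a projection. Using $P^*P=PP^*=P$, we get
\[
\mathcal T(P^*P)=\mathcal T(P)=\mathcal T(P)^2=\mathcal T(P)^*\mathcal T(P),
\]
and the same identity holds with $PP^*$ in place of $P^*P$. So both defining equalities of the multiplicative domain hold, giving $P\in\operatorname{MD}(\mathcal T)$. Theorem~\ref{th-loss} then yields $\Gamma(P)Q=Q\Gamma(P)$.

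For the converse, assume $\Gamma(P)Q=Q\Gamma(P)$. By Theorem~\ref{th-loss}, $P\in\operatorname{MD}(\mathcal T)$, so $\mathcal T(P)=\mathcal T(P^*P)=\mathcal T(P)^*\mathcal T(P)$. Thus $\mathcal T(P)$ is self-adjoint and idempotent, i.e.\ a projection. One can also see this directly from Lemma~\ref{def-lem}, since the defect term $Q\Gamma(P)^*(I-Q)\Gamma(P)Q$ vanishes.

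For the reformulation, recall from Proposition~\ref{prop-basic-pullback} that $\mathcal T(P)=\frac{\mathrm d(\Omega_P\circ\Lambda)}{\mathrm d(\Omega\circ\Lambda)}$. So $\Omega_P\circ\Lambda$ is sharp relative to $\Omega\circ\Lambda$ exactly when $\mathcal T(P)$ is a projection. It remains to note two facts:
\begin{itemize}
\item $\Gamma(P)$ is a projection, because $\Gamma$ is a $*$-homomorphism (Lemma~\ref{lift-com});
\item a bounded operator commutes with $Q$ if and only if $\operatorname{ran}Q=\mathcal K_{\Omega,\Lambda}$ is a reducing subspace for it.
\end{itemize}
This gives the stated equivalence. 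I expect no real obstacle here; the only point needing care is the self-adjointness of $\mathcal T(P)$, which is what makes $\mathcal T(P)^*\mathcal T(P)=\mathcal T(P)^2$, and positivity of $\mathcal T$ supplies it.
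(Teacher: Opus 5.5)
Your proposal is correct and follows essentially the same route as the paper. Since $P=P^*=P^2$ and $\mathcal T(P)$ is a positive contraction (hence self-adjoint), $\mathcal T(P)$ is a projection exactly when $P\in\operatorname{MD}(\mathcal T)$, and Theorem~\ref{th-loss} then gives the commutation criterion; your separate treatment of the two directions and of the reducing-subspace reformulation only spells out what the paper's two-line proof leaves implicit.
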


\begin{proof}
Since $P=P^*=P^2$, we have $P\in\operatorname{MD}(\mathcal T)$ if and only if $ \mathcal T(P)^2=\mathcal T(P)$. Because $\mathcal T(P)$ is a positive contraction, this is equivalent to
$\mathcal T(P)$ being a projection. The result now follows from
Theorem~\ref{th-loss}.
\end{proof}

\section{The finite-dimensional Kraus relation formula}
We now specialize the pullback construction to matrix algebras.  In
finite dimensions, Radon--Nikodym derivatives of completely positive
maps can be described explicitly using Kraus and Stinespring data; see,
for example, \cite{Raginsky,Paulbook}.  The point of this section is to
write the particular pullback map \(\mathcal T_\Lambda^\Omega\) in terms
of the relation space of the composite Kraus family \(L_jV_i\).  The
resulting formula turns the abstract sharpness criterion into a
finite-dimensional linear-algebraic test.

Let $\Omega:\M_m\to \M_n$ be completely positive, and fix a minimal Kraus representation
\[
    \Omega(B)=\sum_{i=1}^r V_i^*BV_i,
    \qquad B\in \M_m.
\]
Thus \(V_1,\ldots,V_r\) are linearly independent matrices from
\(\mathbb C^n\) to \(\mathbb C^m\).  Moreover, under the standard finite-dimensional Stinespring identification associated
with the minimal Kraus representation of \(\Omega\), we have
\[
    \pi_\Omega(\M_m)' \cong \M_r.
\]
Let $\Lambda:\M_\ell\to \M_m$
be completely positive, and choose a Kraus representation
\begin{align}\label{kraus-lambda}
    \Lambda(A)=\sum_{j=1}^s L_j^*AL_j,
    \qquad A\in \M_\ell.
\end{align}
This representation of \(\Lambda\) need not be minimal.  The possible
linear dependence introduced by this choice will be recorded by the
relation space below. Then
\[
    \Omega\circ\Lambda(A)
    =
    \sum_{j=1}^s\sum_{i=1}^r
    (L_j V_i)^*A(L_j V_i).
\]
We write
\[
    W_{j i}=L_j V_i,
    \qquad
    1\leq j\leq s,\ 1\leq i\leq r,
\]
for the   Kraus family of $\Omega\circ\Lambda$. We define the space
\[
    \mathcal R_{\Lambda,\Omega}
    =
    \left\{
        c=(c_{j i})\in \mathbb C^s\otimes\mathbb C^r:
        \sum_{j=1}^s\sum_{i=1}^r
        c_{j i}L_j V_i=0
    \right\}.
\]
Let
\[
    \mathcal E_{\Lambda,\Omega}
    =
    \mathcal R_{\Lambda,\Omega}^{\perp}
    \subseteq
    \mathbb C^s\otimes\mathbb C^r,
\]
and let \(P_{\mathcal E}\) denote the orthogonal projection onto
\(\mathcal E_{\Lambda,\Omega}\).

The following formula is the finite-dimensional form of the compression
model above, written in the coefficient space of the composite Kraus
family.
\begin{theorem}\label{th-kraus-formul}
Under the identifications
\[
    \pi_\Omega(\M_m)' \cong \M_r,
    \qquad
    \pi_{\Omega\circ\Lambda}(\M_\ell)'
    \cong B(\mathcal E_{\Lambda,\Omega}),
\]
the pullback transfer map is represented by
\begin{align}\label{q-finit}
    \mathcal T_\Lambda^\Omega(D)
    =
    P_{\mathcal E}
    (I_s\otimes D)
    P_{\mathcal E}
    \big|_{\mathcal E_{\Lambda,\Omega}},
    \qquad D\in \M_r.
\end{align}
\end{theorem}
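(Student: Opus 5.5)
We realize the general compression model of the previous section concretely, using the standard finite-dimensional Stinespring constructions. The minimal Stinespring representation of $\Omega$ will be taken on $\mathcal K_\Omega=\mathbb C^m\otimes\mathbb C^r$ with
\[
    V\xi=\sum_{i=1}^r V_i\xi\otimes e_i,\qquad \pi(B)=B\otimes I_r .
\]
Minimality is equivalent to linear independence of the $V_i$, and it gives $\pi(\M_m)'=I_m\otimes \M_r\cong\M_r$. For $\pi\circ\Lambda(A)=\Lambda(A)\otimes I_r$ we use the Stinespring dilation
\[
    \mathcal L=\mathbb C^\ell\otimes\mathbb C^s\otimes\mathbb C^r,\qquad
    S(\eta\otimes e_i)=\sum_j L_j\eta\otimes f_j\otimes e_i,\qquad
    \varphi(A)=A\otimes I_s\otimes I_r .
\]
This dilation need not be minimal. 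So the first step is to check that the compression formula \eqref{compres-formul} and Lemma~\ref{lift-com} survive for a non-minimal dilation, as long as the lift is chosen explicitly. Here the natural lift is $\Gamma(D)=I_\ell\otimes I_s\otimes D$. It commutes with $\varphi(\M_\ell)$, it is a unital $*$-homomorphism, and it satisfies $\Gamma(D)\varphi(A)S\eta=\varphi(A)S(I_m\otimes D)\eta$ by direct computation. The proof of \eqref{compres-formul} used only these properties, together with the facts that $\mathcal K_{\Omega,\Lambda}$ reduces $\varphi$ and contains $SV\mathcal H$. Minimality of $(\varphi,S)$ was used only for uniqueness of $\Gamma$, which we do not need. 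Hence \eqref{compres-formul} holds with this $\Gamma$.

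Next we identify $\mathcal K_{\Omega,\Lambda}$. Here $W=SV$ is given by $W\xi=\sum_{j,i}L_jV_i\xi\otimes f_j\otimes e_i$, and
\[
    \mathcal K_{\Omega,\Lambda}=\operatorname{span}\{(A\otimes I)W\xi\}.
\]
Identify $\mathbb C^\ell\otimes(\mathbb C^s\otimes\mathbb C^r)$ with $\ell\times sr$ matrices, or equivalently with the linear maps $(\mathbb C^s\otimes\mathbb C^r)^*\to\mathbb C^\ell$. Applying $A\otimes I$ to $W\xi$ gives the vectors $\sum_{j,i}AW_{ji}\xi\otimes f_j\otimes e_i$. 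Their span is the set of all $x\otimes c$ with $x\in\mathbb C^\ell$ and $c$ ranging over
\[
    \operatorname{span}\{(\langle W_{ji}\xi,y\rangle)_{j,i}:\ \xi\in\mathbb C^n,\ y\in\mathbb C^\ell\}.
\]
Because $\operatorname{span}_A\{A W_{ji}\xi\}$ is all of $\mathbb C^\ell$ as soon as it is nonzero, the span is indeed of product form. A coefficient vector $d$ is orthogonal to this span iff $\sum_{j,i}\bar d_{ji}W_{ji}=0$. Hence the span equals $\overline{\mathcal R_{\Lambda,\Omega}}^{\perp}$. After the harmless antilinear conjugation, or by pairing with $\bar c$ consistently, this is identified with $\mathcal E_{\Lambda,\Omega}$. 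We conclude
\[
    \mathcal K_{\Omega,\Lambda}=\mathbb C^\ell\otimes\mathcal E_{\Lambda,\Omega},\qquad Q=I_\ell\otimes P_{\mathcal E}.
\]

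Finally, $\rho(A)=A\otimes I_{\mathcal E}$ has commutant $I_\ell\otimes B(\mathcal E)\cong B(\mathcal E_{\Lambda,\Omega})$, which gives the second identification. The compression then reads
\[
    Q\Gamma(D)Q=I_\ell\otimes P_{\mathcal E}(I_s\otimes D)P_{\mathcal E},
\]
which is \eqref{q-finit}. Since $(\rho,W)$ is a minimal Stinespring representation of $\Omega\circ\Lambda$, the uniqueness in the Radon--Nikodym theorem justifies reading off $\mathcal T_\Lambda^\Omega(D)$ under this identification.

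The main obstacle is the bookkeeping in the second step. One must make sure the conjugation convention matches the paper's definition of $\mathcal E$ as $\mathcal R^\perp$; otherwise it is the complex-conjugate subspace. One should fix the ordering $\mathbb C^s\otimes\mathbb C^r$ so that $D$ acts as $I_s\otimes D$. We would first try defining $W$ with the vectors arranged so that the orthogonality condition is literally $\sum c_{ji}W_{ji}=0$. If that fails, we would note that transposing the identification of $\pi_\Omega(\M_m)'$ with $\M_r$ absorbs the conjugation.
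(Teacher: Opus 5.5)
Your route differs from the paper's. The paper writes $\Omega_D\circ\Lambda$ in Kraus coordinates with coefficient matrix $I_s\otimes D$ and then asserts, without checking, that the minimal Stinespring space is $\mathbb C^\ell\otimes\mathcal E_{\Lambda,\Omega}$. You instead instantiate \eqref{compres-formul} with the non-minimal dilation $(\varphi,S,\mathcal L)$ and the explicit lift $\Gamma(D)=I_\ell\otimes I_s\otimes D$. That reduction is sound: minimality of $(\varphi,S)$ only serves uniqueness of $\Gamma$. Your computation of $\mathcal K_{\Omega,\Lambda}$ is also right. A vector $d$ is orthogonal to all coefficient vectors iff $\sum\overline{d_{ji}}L_jV_i=0$, so $\mathcal K_{\Omega,\Lambda}=\mathbb C^\ell\otimes\overline{\mathcal E_{\Lambda,\Omega}}$.

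The gap is the next sentence: the conjugation is not harmless. The identification $J:\overline{\mathcal E}\to\mathcal E$ is antiunitary, and $J(I_s\otimes D)J=I_s\otimes\overline D$. So what your argument proves is
\[
\mathcal T_\Lambda^\Omega(D)=P_{\overline{\mathcal E}}(I_s\otimes D)P_{\overline{\mathcal E}}\big|_{\overline{\mathcal E}},
\]
and the claim $Q=I_\ell\otimes P_{\mathcal E}$ is false in general. Your fallback does not work either. Since $P_{\overline{\mathcal E}}=P_{\mathcal E}^T$, transposing only the domain identification produces $\big(P_{\mathcal E}(I_s\otimes D)P_{\mathcal E}\big)^T$. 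This matches \eqref{q-finit} only if the codomain is also identified through a transpose, which is an anti-isomorphism. It also breaks the case $\mathcal R_{\Lambda,\Omega}=0$, where the untransposed identification was already correct.

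No bookkeeping can close this under the conventions the paper fixes: $D\leftrightarrow I_m\otimes D$ (i.e.\ $\Omega_D(B)=\sum d_{ij}V_i^*BV_j$), and $\mathcal R_{\Lambda,\Omega}$ defined by $\sum c_{ji}L_jV_i=0$. Take $\Omega=\mathrm{Tr}$ on $\M_2$ with $V_i=e_i$, and $\Lambda:\M_1\to\M_2$, $\Lambda(a)=aL^*L$ with $L=(i,\,-2)$.
\begin{itemize}
\item $\mathcal R_{\Lambda,\Omega}=\mathrm{span}\{(2,i)\}$.
\item $(\Omega_D\circ\Lambda)(a)=a\,w^*Dw$ with $w=(i,-2)^T\in\overline{\mathcal E_{\Lambda,\Omega}}$, so $\mathcal T_\Lambda^\Omega(D)=w^*Dw/5$.
\item For the projection $P$ onto $(2,i)/\sqrt5$, the right side of \eqref{q-finit} is $0$, whereas $\mathcal T_\Lambda^\Omega(P)=16/25$.
\end{itemize}
The codomain is one-dimensional, so no identification can reconcile these values. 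The paper's proof slides over exactly this point when it says the minimal space is obtained by restricting to $\mathcal E_{\Lambda,\Omega}$, so you did locate a real issue.

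The correct conclusion of your argument is \eqref{q-finit} with $\overline{\mathcal E_{\Lambda,\Omega}}$ in place of $\mathcal E_{\Lambda,\Omega}$, equivalently with relations $\sum\overline{c_{ji}}L_jV_i=0$. You should state it that way rather than dismiss the conjugation. It agrees with the printed statement whenever $\mathcal R_{\Lambda,\Omega}$ is spanned by real vectors, e.g.\ for real Kraus operators as in the paper's example.
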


\begin{proof}
We first identify the abstract domain and codomain of the pullback
transfer map in the present finite-dimensional model.

The minimal Kraus representation of \(\Omega\) gives a minimal
Stinespring representation on  $ \mathbb C^m\otimes \mathbb C^r$ by
\[
    \pi_\Omega(B)=B\otimes I_r,
    \qquad
    \mathsf V_\Omega \xi
    =
    \sum_{i=1}^r V_i\xi\otimes f_i\qquad (B\in\M_m),
\]
where $f_i's$ are   the standard orthonormal basis of  $\mathbb{C}^r$.
Indeed, $\mathsf V_\Omega: \C^n\to\C^m\otimes\C^r$ and
\[
    \mathsf V_\Omega^*(B\otimes I_r)\mathsf V_\Omega
    =
    \sum_{i=1}^r V_i^*BV_i
    =
    \Omega(B).
\]
Therefore
\[
    \pi_\Omega(\M_m)'
    =
    (\M_m\otimes I_r)'
    =
    I_m\otimes \M_r.
\]
Thus, under the identification
\[
    \pi_\Omega(\M_m)'\cong \M_r,
\]
an element \(D=[d_{ij}]\in \M_r\) represents the commutant operator
\(I_m\otimes D\). Hence  we have
\begin{align}\label{kraus-omega}
    \Omega_D(B)
    =
    \mathsf V_\Omega^*
    (B\otimes I_r)
    (I_m\otimes D)
    \mathsf V_\Omega
       =
    \sum_{i,j=1}^r d_{ij}V_i^*BV_j.
\end{align}
Next consider the composite Kraus family
\[
    W_{j i}=L_j V_i.
\]
 The coefficient space of the family \(\{W_{ji}\}\) is
\(\mathbb C^s\otimes\mathbb C^r\).  Its relation space is precisely
\(\mathcal R_{\Lambda,\Omega}\).  Passing to the orthogonal complement
\[
    \mathcal E_{\Lambda,\Omega}
    =
    \mathcal R_{\Lambda,\Omega}^{\perp}
\]
removes exactly the linear redundancies in the family \(\{W_{ji}\}\).
Thus \(\mathcal E_{\Lambda,\Omega}\) is the minimal coefficient space
for the composite Kraus family.

  Hence the minimal Stinespring representation of
\(\Omega\circ\Lambda\) is
\[
    \mathbb C^\ell\otimes\mathcal E_{\Lambda,\Omega}
\]
with
\[
    \rho(A)=A\otimes I_{\mathcal E_{\Lambda,\Omega}}.
\]
Therefore
\[
    \pi_{\Omega\circ\Lambda}(\M_\ell)'
    \cong
    I_\ell\otimes B(\mathcal E_{\Lambda,\Omega})
    \cong
    B(\mathcal E_{\Lambda,\Omega}).
\]
Let \(D=[d_{ij}]\in \M_r\) be positive. It follows from \eqref{kraus-omega} and \eqref{kraus-lambda} that
\begin{align}
    (\Omega_D\circ\Lambda)(A) &=
               \sum_{\mu=1}^s\sum_{i,j=1}^r
    d_{ij}
    (L_\mu V_i)^*A(L_\mu V_j).
\end{align}
Since \(W_{\mu i}=L_\mu V_i\), this can be written as
\[
    (\Omega_D\circ\Lambda)(A)
    =
    \sum_{\mu,\nu=1}^s
    \sum_{i,j=1}^r
    (I_s\otimes D)_{\mu i,\nu j}
    W_{\mu i}^*A W_{\nu j},
\]
in which  $(I_s\otimes D)_{\mu i,\nu j}  = \delta_{\mu\nu}d_{ij}$.
The family \(\{W_{\mu i}\}\) may be linearly dependent.  The minimal Stinespring space of
\(\Omega\circ\Lambda\) is obtained by restricting this coefficient space
to
\[
    \mathcal E_{\Lambda,\Omega}
    =
    \mathcal R_{\Lambda,\Omega}^{\perp}.
\]
Hence the Radon--Nikodym derivative with respect to the minimal
Stinespring representation of \(\Omega\circ\Lambda\) is obtained by
compressing \(I_s\otimes D\) to this subspace:
\[
    \frac{\mathrm{d}(\Omega_D\circ\Lambda)}
         {\mathrm{d}(\Omega\circ\Lambda)}
    =
    P_{\mathcal E}
    (I_s\otimes D)
    P_{\mathcal E}
    \big|_{\mathcal E_{\Lambda,\Omega}}.
\]
By the defining identity of the pullback transfer map,
\[
    \mathcal T_\Lambda^\Omega(D)
    =
    \frac{\mathrm{d}(\Omega_D\circ\Lambda)}
         {\mathrm{d}(\Omega\circ\Lambda)}.
\]
Therefore \eqref{q-finit} holds for every \(D\geq0\). Since every element
of \(\M_r\) is a linear combination of positive matrices, and both sides
of \eqref{q-finit} are linear in \(D\), the formula holds for all
\(D\in\M_r\).
 \end{proof}


Assume \(\Omega\circ\Lambda\neq 0\). Applying the finite-dimensional compression formula of Theorem 4.1 to projection operators immediately yields the following algebraic criterion for the preservation of sharpness.
\begin{corollary}\label{co-proj}
Let \(P\in \M_r\) be a projection, and let \(\Omega_P\leq \Omega\) be the
corresponding sharp submap. Then the following are equivalent:

\begin{enumerate}
    \item the pullback \(\Omega_P\circ\Lambda\leq \Omega\circ\Lambda\)
    is sharp;

    \item \(\mathcal T_\Lambda^\Omega(P)\) is a projection;

    \item \(\mathcal E_{\Lambda,\Omega}\) reduces \(I_s\otimes P\);

    \item
    \(
        P_{\mathcal E}(I_s\otimes P)
        =
        (I_s\otimes P)P_{\mathcal E}.
    \)
\end{enumerate}
\end{corollary}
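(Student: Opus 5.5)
(1)$\Leftrightarrow$(2) is the definition of sharpness. By the last identity of Proposition~\ref{prop-basic-pullback},
\[
  \mathcal T_\Lambda^\Omega(P)=\frac{\mathrm d(\Omega_P\circ\Lambda)}{\mathrm d(\Omega\circ\Lambda)},
\]
so $\Omega_P\circ\Lambda$ is sharp relative to $\Omega\circ\Lambda$ exactly when $\mathcal T_\Lambda^\Omega(P)$ is a projection. The hypothesis $\Omega\circ\Lambda\neq0$ guarantees $\mathcal E_{\Lambda,\Omega}\neq\{0\}$, so the setting is nondegenerate.

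(3)$\Leftrightarrow$(4) is standard. A closed subspace reduces a bounded operator exactly when its orthogonal projection commutes with the operator. Since $I_s\otimes P$ is self-adjoint, invariance of $\mathcal E_{\Lambda,\Omega}$ alone already suffices: $P_{\mathcal E}(I_s\otimes P)P_{\mathcal E}=(I_s\otimes P)P_{\mathcal E}$ implies, after taking adjoints, that $(I_s\otimes P)$ commutes with $P_{\mathcal E}$.

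For (2)$\Leftrightarrow$(4) I argue directly from Theorem~\ref{th-kraus-formul}, mimicking Lemma~\ref{def-lem}. Write $G=I_s\otimes P$, a projection on $\mathbb C^s\otimes\mathbb C^r$, and $Q=P_{\mathcal E}$. Then $\mathcal T(P)=QGQ|_{\mathcal E}$. Since $G=G^*G$,
\[
 \mathcal T(P)-\mathcal T(P)^2 = QG(I-Q)GQ\big|_{\mathcal E}=\big((I-Q)GQ\big)^*\big((I-Q)GQ\big)\big|_{\mathcal E}.
\]
Because $\mathcal T(P)$ is a positive contraction (by Proposition~\ref{prop-basic-pullback}, or directly from the formula), it is a projection iff $\mathcal T(P)=\mathcal T(P)^2$. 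By the display this holds iff $(I-Q)GQ=0$ on $\mathcal E$, and hence on all of $\mathbb C^s\otimes\mathbb C^r$, since $Q$ vanishes on $\mathcal E^\perp$. In other words, $GQ=QGQ$. Taking adjoints gives $QG=QGQ$, so $GQ=QG$, which is (4). The converse follows by reading the same chain of equivalences backwards.

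Alternatively, one could invoke the abstract corollary of Theorem~\ref{th-loss}, matching $\Gamma_\Lambda^\Omega(P)$ with $I_\ell\otimes I_s\otimes P$ and $Q_{\Omega,\Lambda}$ with $I_\ell\otimes P_{\mathcal E}$ in a concrete dilation. That route would require checking that the non-minimal space $\mathbb C^\ell\otimes\mathbb C^s\otimes\mathbb C^r$ is indeed the minimal Stinespring space of $\pi\circ\Lambda$. The direct computation above avoids this, so it is the approach I take. The only real point of care is that the identifications in Theorem~\ref{th-kraus-formul} preserve the property of being a projection. They do, because they are $*$-isomorphisms $I_\ell\otimes B(\mathcal E)\cong B(\mathcal E)$.
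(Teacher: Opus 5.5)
\textbf{Verdict: genuine gap.} Your internal steps are fine and follow the paper's route exactly. You get (1)$\Leftrightarrow$(2) from Proposition~\ref{prop-basic-pullback}, then apply Theorem~\ref{th-kraus-formul}, then use the fact that the compression of the projection $G=I_s\otimes P$ to $\mathcal E_{\Lambda,\Omega}$ is a projection iff $\mathcal E_{\Lambda,\Omega}$ reduces $G$. The paper only asserts that last fact; you prove it correctly with the defect identity later used in Corollary~\ref{co-loss-index}.

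The gap is that you take Theorem~\ref{th-kraus-formul} at face value. The issue is not whether its identifications preserve projections; the formula itself is off by a complex conjugation. In the dilation $\C^\ell\otimes\C^s\otimes\C^r$ with $\widetilde W\xi=\sum_{j,i}L_jV_i\xi\otimes e_j\otimes f_i$, a vector $x\otimes c$ with $x\neq0$ is orthogonal to every $(A\otimes I)\widetilde W\xi$ iff $\sum_{j,i}\overline{c_{ji}}\,L_jV_i=0$. Hence the minimal Stinespring space of $\Omega\circ\Lambda$ is $\C^\ell\otimes\overline{\mathcal R_{\Lambda,\Omega}}^{\,\perp}=\C^\ell\otimes\overline{\mathcal E_{\Lambda,\Omega}}$, not $\C^\ell\otimes\mathcal E_{\Lambda,\Omega}$. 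In your alternative route, this means $Q_{\Omega,\Lambda}=I_\ell\otimes P_{\overline{\mathcal E}}$, not $I_\ell\otimes P_{\mathcal E}$. Since $\overline{\mathcal E}$ reduces $I_s\otimes P$ iff $\mathcal E$ reduces $I_s\otimes\overline P$, conditions (3) and (4) test the wrong projection when the data are not real. The statement as written is therefore false.

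\textbf{Counterexample.} Take $\Omega=\mathrm{Tr}$ on $\M_2$ with $V_i=e_i$, and let $y=\tfrac12(\sqrt3,\,i)^T$. Let $\Lambda:\M_1=\C\to\M_2$ be $\Lambda(a)=a\,yy^*$ (so $s=1$, $L_1=y^*$), and let $P=yy^*$.
\begin{itemize}
\item Then $L_1V_i=\overline{y_i}$, so $\mathcal R_{\Lambda,\Omega}=y^\perp$, $\mathcal E_{\Lambda,\Omega}=\C y$ and $P_{\mathcal E}=P$. Thus (3) and (4) hold trivially.
\item But $\Omega_P(B)=\sum_{i,j}p_{ij}B_{ij}$ gives $(\Omega_P\circ\Lambda)(a)=a\,\bigl|\sum_i y_i^2\bigr|^2=a/4$, while $(\Omega\circ\Lambda)(a)=a$.
\item So the Radon--Nikodym derivative is $1/4$, and (1) and (2) fail.
\end{itemize}
Theorem~\ref{th-kraus-formul} predicts $\langle y,Py\rangle=1$, whereas the correct compression to $\C\bar y$ gives $|\langle y,\bar y\rangle|^2=1/4$. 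The paper's proof has the same dependence, so following it would not have helped.

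\textbf{Repair.} Replace $\mathcal E_{\Lambda,\Omega}$ by $\overline{\mathcal E_{\Lambda,\Omega}}$ in (3) and (4), or equivalently replace $P$ by $\overline P=P^{T}$. With that change your defect argument goes through verbatim. The discrepancy vanishes when $\mathcal R_{\Lambda,\Omega}$ is invariant under conjugation, e.g.\ when all $L_jV_i$ are real, as in the paper's worked example.
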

\begin{proof}
The Radon--Nikodym derivative of
\(\Omega_P\circ\Lambda\) with respect to \(\Omega\circ\Lambda\) is
\(\mathcal T_\Lambda^\Omega(P)\). Hence
\(\Omega_P\circ\Lambda\) is sharp if and only if
\(\mathcal T_\Lambda^\Omega(P)\) is a projection.
It follows from Theorem~\ref{th-kraus-formul} that
\[
    \mathcal T_\Lambda^\Omega(P)
    =
    P_{\mathcal E}(I_s\otimes P)P_{\mathcal E}
    \big|_{\mathcal E_{\Lambda,\Omega}}.
\]
Since \(I_s\otimes P\) is itself a projection, its compression to
\(\mathcal E_{\Lambda,\Omega}\) is a projection if and only if
\(\mathcal E_{\Lambda,\Omega}\) reduces \(I_s\otimes P\). Equivalently,
\[
    P_{\mathcal E}(I_s\otimes P)
    =
    (I_s\otimes P)P_{\mathcal E}.
\]
This proves the equivalences.
\end{proof}

\begin{remark}\label{rem-mixed-relations}
The criterion in Corollary~\ref{co-proj} has a concrete interpretation
in terms of linear relations among the composite Kraus operators
\[
    W_{ji}=L_jV_i .
\]
Indeed, since
\[
    \mathcal E_{\Lambda,\Omega}
    =
    \mathcal R_{\Lambda,\Omega}^{\perp},
\]
and \(I_s\otimes P\) is self-adjoint, the subspace
\(\mathcal E_{\Lambda,\Omega}\) reduces \(I_s\otimes P\) if and only if
the relation space \(\mathcal R_{\Lambda,\Omega}\) reduces
\(I_s\otimes P\).

Thus sharpness of the pullback
\[
    \Omega_P\circ\Lambda\leq \Omega\circ\Lambda
\]
is equivalent to the following splitting property: every relation
\[
    \sum_{j=1}^s\sum_{i=1}^r c_{ji}L_jV_i=0
\]
splits into two independent relations along
\[
    \mathbb C^s\otimes\mathbb C^r
    =
    (\mathbb C^s\otimes P\mathbb C^r)
    \oplus
    (\mathbb C^s\otimes (I-P)\mathbb C^r).
\]

Equivalently, after choosing a basis in which
\[
    P=
    \begin{pmatrix}
        I_k & 0\\
        0 & 0
    \end{pmatrix},
\]
sharpness is preserved if and only if every relation
\[
    \sum_{j=1}^s\sum_{i\leq k} a_{ji}L_jV_i
    +
    \sum_{j=1}^s\sum_{i>k} b_{ji}L_jV_i
    =
    0
\]
implies
\[
    \sum_{j=1}^s\sum_{i\leq k} a_{ji}L_jV_i=0,
    \qquad
    \sum_{j=1}^s\sum_{i>k} b_{ji}L_jV_i=0.
\]
Therefore sharpness is lost precisely when there exists a linear relation
among the composite Kraus operators which genuinely mixes the
\(P\)-sector and the \((I-P)\)-sector.
\end{remark}

The preceding results provide qualitative criteria for the preservation of a sharp submap. We now use the finite-dimensional compression formula to introduce a quantitative measure for the loss of sharpnes.

 \begin{definition}\label{def-sharp-defect}
Let  \(P\in\M_r\) be a
projection.  The \emph{sharpness-loss defect} of \(P\) under pullback by
\(\Lambda\) is the positive operator
\[
    \Delta_{\Lambda,\Omega}(P)
    :=
    \mathcal T_\Lambda^\Omega(P)
    -
    \mathcal T_\Lambda^\Omega(P)^2
    \in B(\mathcal E_{\Lambda,\Omega}).
\]
Thus \(\Delta_{\Lambda,\Omega}(P)\) is the Schwarz defect of the unital
completely positive map \(\mathcal T_\Lambda^\Omega\) at the projection
\(P\).
\end{definition}
\begin{corollary}\label{co-loss-index}
Let \(P\in \M_r\) be a projection and put $ R_P=I_s\otimes P$.
Then
\[
    \Delta_{\Lambda,\Omega}(P)
    =
    P_{\mathcal E}R_P(I-P_{\mathcal E})R_PP_{\mathcal E}
    \big|_{\mathcal E_{\Lambda,\Omega}}.
\]
and
\[
    \|\Delta_{\Lambda,\Omega}(P)\|
    =
    \|(I-P_{\mathcal E})R_PP_{\mathcal E}\|^2.
\]
Moreover, $\Delta_{\Lambda,\Omega}(P)=0$ if and only if the pullback $\Omega_P\circ\Lambda\leq \Omega\circ\Lambda$ is sharp.

In particular, \(\|\Delta_{\Lambda,\Omega}(P)\|\) gives a quantitative
measure of the failure of the sharp submap \(\Omega_P\) to remain sharp
after pullback by \(\Lambda\).
\end{corollary}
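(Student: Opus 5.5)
The plan is to compute directly from the finite-dimensional compression formula of Theorem~\ref{th-kraus-formul}, which gives $\mathcal T_\Lambda^\Omega(P)=P_{\mathcal E}R_PP_{\mathcal E}|_{\mathcal E_{\Lambda,\Omega}}$. Since $R_P=I_s\otimes P$ is a projection, $R_P^2=R_P$, and so
\[
    \mathcal T_\Lambda^\Omega(P)=P_{\mathcal E}R_P^2P_{\mathcal E}\big|_{\mathcal E_{\Lambda,\Omega}}=P_{\mathcal E}R_P\big(P_{\mathcal E}+(I-P_{\mathcal E})\big)R_PP_{\mathcal E}\big|_{\mathcal E_{\Lambda,\Omega}} .
\]
On the other hand, because $P_{\mathcal E}$ acts as the identity on $\mathcal E_{\Lambda,\Omega}$, we have $\mathcal T_\Lambda^\Omega(P)^2=P_{\mathcal E}R_PP_{\mathcal E}R_PP_{\mathcal E}|_{\mathcal E_{\Lambda,\Omega}}$. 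Subtracting gives the first formula for $\Delta_{\Lambda,\Omega}(P)$. This is the concrete analogue of Lemma~\ref{def-lem}, with $\Gamma$ replaced by $D\mapsto I_s\otimes D$, and that map is indeed a unital $*$-homomorphism.

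For the norm identity, set $X=(I-P_{\mathcal E})R_PP_{\mathcal E}$. Since $I-P_{\mathcal E}$ is a projection, $X^*X=P_{\mathcal E}R_P(I-P_{\mathcal E})R_PP_{\mathcal E}$. Hence $\Delta_{\Lambda,\Omega}(P)$ is the restriction of $X^*X$ to $\mathcal E_{\Lambda,\Omega}$. Because $X^*X$ vanishes on $\mathcal E_{\Lambda,\Omega}^\perp$ and maps into $\mathcal E_{\Lambda,\Omega}$, the restriction has the same norm as $X^*X$. The C$^*$-identity $\|X^*X\|=\|X\|^2$ then gives the stated equality. The same factorization also shows that $\Delta_{\Lambda,\Omega}(P)\geq 0$.

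For the final equivalence, note that $\Delta_{\Lambda,\Omega}(P)=0$ if and only if $X=0$, that is, $(I-P_{\mathcal E})R_PP_{\mathcal E}=0$, or equivalently $R_PP_{\mathcal E}=P_{\mathcal E}R_PP_{\mathcal E}$. Taking adjoints gives $P_{\mathcal E}R_P=P_{\mathcal E}R_PP_{\mathcal E}$. Combining the two yields $R_PP_{\mathcal E}=P_{\mathcal E}R_P$, which is condition (4) of Corollary~\ref{co-proj}. That corollary identifies condition (4) with sharpness of $\Omega_P\circ\Lambda$. The converse direction is immediate from the formula: if $P_{\mathcal E}$ commutes with $R_P$, then $(I-P_{\mathcal E})R_PP_{\mathcal E}=(I-P_{\mathcal E})P_{\mathcal E}R_P=0$.

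The only point I expect to need care is the norm step, namely that restricting to $\mathcal E_{\Lambda,\Omega}$ does not change the norm. This holds because $X^*X=P_{\mathcal E}X^*XP_{\mathcal E}$, so everything else in the argument is routine algebra with projections.
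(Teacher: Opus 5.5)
Your argument is correct and is essentially the paper's proof. Both start from the compression formula of Theorem~\ref{th-kraus-formul}, use $R_P^2=R_P$ to rewrite the defect as $X^*X|_{\mathcal E_{\Lambda,\Omega}}$ with $X=(I-P_{\mathcal E})R_PP_{\mathcal E}$, read off the norm from the C$^*$-identity, and conclude with Corollary~\ref{co-proj}; the only difference is that you spell out two steps the paper leaves implicit. Those steps are that restricting $X^*X=P_{\mathcal E}X^*XP_{\mathcal E}$ to $\mathcal E_{\Lambda,\Omega}$ does not change its norm, and that $X=0$ forces $R_P$ and $P_{\mathcal E}$ to commute by taking adjoints.
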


\begin{proof}
By Theorem~\ref{th-kraus-formul}, we have $\mathcal T_\Lambda^\Omega(P)
    =
    P_{\mathcal E}R_PP_{\mathcal E}
    \big|_{\mathcal E_{\Lambda,\Omega}}$.
Since \(R_P^2=R_P\), we get
\begin{align*}
    \Delta_{\Lambda,\Omega}(P)
    &=
    P_{\mathcal E}R_PP_{\mathcal E}
    -
    P_{\mathcal E}R_PP_{\mathcal E}R_PP_{\mathcal E}
    \big|_{\mathcal E_{\Lambda,\Omega}}       \\
    &=
    P_{\mathcal E}R_P(I-P_{\mathcal E})R_PP_{\mathcal E}
    \big|_{\mathcal E_{\Lambda,\Omega}}.
\end{align*}
This is the same as
\[
    \big((I-P_{\mathcal E})R_PP_{\mathcal E}\big)^*
    \big((I-P_{\mathcal E})R_PP_{\mathcal E}\big)
    \big|_{\mathcal E_{\Lambda,\Omega}},
\]
and so \[
    \|\Delta_{\Lambda,\Omega}(P)\|
    =
    \|(I-P_{\mathcal E})R_PP_{\mathcal E}\|^2.
\]

Finally, \(\Delta_{\Lambda,\Omega}(P)=0\) if and only if
\[
    (I-P_{\mathcal E})R_PP_{\mathcal E}=0.
\]
Since \(R_P\) and \(P_{\mathcal E}\) are projections, this is equivalent
to \(\mathcal E_{\Lambda,\Omega}\) reducing \(R_P=I_s\otimes P\).  By
Corollary~\ref{co-proj}, this is equivalent to sharpness of
\(\Omega_P\circ\Lambda\).
\end{proof}

\begin{corollary}
Assume $\Omega\circ\Lambda\neq 0$. With the notation above, the following are equivalent:
\begin{enumerate}
    \item \(\mathcal T_\Lambda^\Omega\) is a \(*\)-homomorphism;

    \item \(\mathcal T_\Lambda^\Omega(P)\) is a projection for every
    projection \(P\in\M_r\);

    \item \(\mathcal E_{\Lambda,\Omega}\) reduces \(I_s\otimes \M_r\);

    \item there exists a subspace $\mathcal F\subseteq \mathbb C^s$  such that $\mathcal E_{\Lambda,\Omega}  = \mathcal F\otimes \mathbb C^r$.
\end{enumerate}

Equivalently, pullback preserves all sharp submaps of \(\Omega\) if and
only if the minimal composite Kraus-index space has the tensor form
\[
    \mathcal E_{\Lambda,\Omega}
    =
    \mathcal F\otimes\mathbb C^r .
\]
\end{corollary}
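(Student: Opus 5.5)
The plan is to prove the chain (1)$\Rightarrow$(2)$\Rightarrow$(3)$\Rightarrow$(4)$\Rightarrow$(1), working throughout with the compression formula of Theorem~\ref{th-kraus-formul}, namely $\mathcal T_\Lambda^\Omega(D)=P_{\mathcal E}(I_s\otimes D)P_{\mathcal E}|_{\mathcal E_{\Lambda,\Omega}}$. The hypothesis $\Omega\circ\Lambda\neq0$ only guarantees that $\mathcal E_{\Lambda,\Omega}\neq\{0\}$, so that the statements are not vacuous.

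The implication (1)$\Rightarrow$(2) is immediate, since a $*$-homomorphism maps projections to projections. For (2)$\Rightarrow$(3), I will apply Corollary~\ref{co-proj} to each projection $P\in\M_r$; this shows that $P_{\mathcal E}$ commutes with $I_s\otimes P$ for every projection $P$. Every matrix in $\M_r$ is a linear combination of projections (for instance, via the spectral decompositions of its real and imaginary parts). Hence $P_{\mathcal E}$ commutes with all of $I_s\otimes\M_r$, and $\mathcal E_{\Lambda,\Omega}$ reduces that algebra. The implication (3)$\Rightarrow$(1) will also be recorded, since it closes the loop cheaply. If $P_{\mathcal E}$ commutes with $I_s\otimes D$ for all $D$, then the compression is multiplicative: $P_{\mathcal E}(I_s\otimes D_1)P_{\mathcal E}(I_s\otimes D_2)P_{\mathcal E}=P_{\mathcal E}(I_s\otimes D_1D_2)P_{\mathcal E}$. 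Alternatively, one can invoke Theorem~\ref{th-loss}, which puts all of $\M_r$ in the multiplicative domain.

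The main step is (3)$\Leftrightarrow$(4). For (4)$\Rightarrow$(3), the subspace $\mathcal F\otimes\mathbb C^r$ is visibly invariant under $I_s\otimes D$ for every $D$, and since the algebra $I_s\otimes\M_r$ is self-adjoint, invariance gives reducing. For (3)$\Rightarrow$(4), I will use the commutant: $P_{\mathcal E}\in(I_s\otimes\M_r)'=\M_s\otimes I_r$. So $P_{\mathcal E}=F\otimes I_r$ for some $F\in\M_s$, and $F$ is a projection because $F\mapsto F\otimes I_r$ is an injective $*$-homomorphism and $P_{\mathcal E}$ is a projection. Taking $\mathcal F=F\mathbb C^s$ then gives $\mathcal E_{\Lambda,\Omega}=\operatorname{ran}(F\otimes I_r)=\mathcal F\otimes\mathbb C^r$. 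The only nontrivial ingredient is the double-commutant identity $(I_s\otimes\M_r)'=\M_s\otimes I_r$. I will verify it directly by writing an operator in block form $[X_{ab}]$ with $X_{ab}\in\M_r$ and observing that commuting with $I_s\otimes D$ forces each $X_{ab}$ to commute with $D$ for every $D$, hence each $X_{ab}$ is scalar.

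The final sentence of the corollary, that pullback preserves all sharp submaps exactly when $\mathcal E_{\Lambda,\Omega}=\mathcal F\otimes\mathbb C^r$, is just (2)$\Leftrightarrow$(4) restated. It uses the identification of sharp submaps $\Omega_P$ with projections $P\in\M_r$ together with Corollary~\ref{co-proj}(1)$\Leftrightarrow$(2).
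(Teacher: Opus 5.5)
Your proposal is correct and follows the paper's proof essentially step for step. The common steps are (1)$\Rightarrow$(2) because $*$-homomorphisms preserve projections, (2)$\Rightarrow$(3) via Corollary~\ref{co-proj} together with the fact that projections span $\M_r$, and (3)$\Leftrightarrow$(4) via the commutant identity $(I_s\otimes\M_r)'=\M_s\otimes I_r$. The only differences are cosmetic: you close the cycle with a direct multiplicativity check of the compression for (3)$\Rightarrow$(1), where the paper gets (1)$\Leftrightarrow$(3) from Theorem~\ref{th-loss}, and you spell out routine details the paper leaves implicit (the block-matrix commutant computation and why $F$ is a projection).
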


\begin{proof}
 Note that a unital completely positive map is a \(*\)-homomorphism if and only if
its multiplicative domain is the whole domain. On the other hand, applying  Theorem~\ref{th-loss} with
\(\Gamma_\Lambda^\Omega(D)=I_s\otimes D\) and
\(Q_{\Omega,\Lambda}=P_{\mathcal E}\) implies that
\[
    \operatorname{MD}(\mathcal T_\Lambda^\Omega)=\M_r\
\]
if and only if $\mathcal E_{\Lambda,\Omega}$ reduces \(I_s\otimes D\) for every \(D\in\M_r\). This is precisely the
condition that $\mathcal E_{\Lambda,\Omega}$  reduces $I_s\otimes \M_r$.
Hence (1) and (3) are equivalent.

If \(\mathcal T_\Lambda^\Omega\) is a \(*\)-homomorphism, then it sends
projections to projections, so (1) implies (2).

 Suppose that
\(\mathcal T_\Lambda^\Omega(P)\) is a projection for every projection
\(P\in\M_r\). Corollary~\ref{co-proj} then implies that $\mathcal E_{\Lambda,\Omega}$
reduces \(I_s\otimes P\) for every projection \(P\in\M_r\). Since the
projections linearly span \(\M_r\), it follows that $\mathcal E_{\Lambda,\Omega}$
     reduces $I_s\otimes \M_r$. Thus (2) implies (3).

A subspace \(\mathcal E\subseteq \mathbb C^s\otimes\mathbb C^r\)
reduces \(I_s\otimes\M_r\) if and only if its projection
\(P_{\mathcal E}\) commutes with \(I_s\otimes\M_r\). As
\[
    (I_s\otimes\M_r)'=\M_s\otimes I_r,
\]
this is equivalent to $P_{\mathcal E}=P_{\mathcal F}\otimes I_r$
for some projection \(P_{\mathcal F}\in\M_s\). Equivalently,
\[
    \mathcal E_{\Lambda,\Omega}
    =
    \mathcal F\otimes\mathbb C^r,
    \qquad
    \mathcal F=P_{\mathcal F}\mathbb C^s .
\]
This proves the equivalence of (3) and (4), and the proof is complete.
\end{proof}
\begin{remark}[Postcomposition]
There is an analogous order-theoretic question for postcomposition.  If
\(\Omega:\mathscr A\to B(\mathcal H)\) and
\(\Theta:B(\mathcal H)\to B(\mathcal K)\) are completely positive maps,
then \(0\leq\Phi\leq\Omega\) implies
\[
    0\leq \Theta\circ\Phi\leq \Theta\circ\Omega .
\]
Thus we  also may ask when the Radon--Nikodym derivative of
\(\Theta\circ\Phi\) with respect to \(\Theta\circ\Omega\) remains
projection-valued.  The present paper focuses on precomposition, where
the relation-space formula above gives a concrete sharpness criterion.
We do not pursue the postcomposition case here.
\end{remark}


\subsection{A finite classical interpretation}
 We end with the finite commutative case, which serves as a consistency
check and as intuition for the term ``sharpness loss.''  In this case
the pullback formula reduces to ordinary conditional probability.
In this setting, the loss of sharpness has the elementary meaning that
a definite event before observation becomes a posterior probability
after observation.

Let \(X\) and \(Y\) be finite sets. Let \(\mu\) be a probability measure
on \(X\), and let \(K(y\mid x)\) be a stochastic matrix from \(X\) to \(Y\).
Let $ \Omega:C(X)\to \mathbb C$ be the state
\[
    \Omega(f)=\sum_{x\in X}\mu(x)f(x).
\]
Define $\Lambda:C(Y)\to C(X)$ by
\[
    (\Lambda g)(x)=\sum_{y\in Y}K(y\mid x)g(y).
\]
Then
\[
    (\Omega\circ\Lambda)(g)
    =
    \sum_{y\in Y}\nu(y)g(y),
\]
where
\[
    \nu(y)=\sum_{x\in X}\mu(x)K(y\mid x).
\]
We assume that \(\nu(y)>0\).
  The stochastic matrix \(K\) and the probability measure \(\mu\) cab be used to give
a joint probability measure on \(X\times Y\) by
\[
    \mathbb P(X=x,Y=y)=\mu(x)K(y\mid x).
\]
Assume that \(\mathbb P(Y=y)>0\). For a subset \(S\subseteq X\), assume that
\[
    p_S(y):=\mathbb P(X\in S\mid Y=y).
\]
Equivalently,
\[
    p_S(y)
    =
    \frac{
        \sum_{x\in S}\mu(x)K(y\mid x)
    }{
        \sum_{x\in X}\mu(x)K(y\mid x)
    }.
\]

We claim that
\[
    \frac{\mathrm{d}(\Omega_{1_S}\circ\Lambda)}
         {\mathrm{d}(\Omega\circ\Lambda)}
    =
    p_S.
\]
Indeed, for every \(g\in C(Y)\),
\[
    (\Omega_{1_S}\circ\Lambda)(g)
    =
    \sum_{y\in Y}p_S(y)g(y)\,\mathbb P(Y=y)
    =
    (\Omega\circ\Lambda)(p_Sg).
\]
Therefore
\[
    \mathcal T_\Lambda^\Omega(1_S)
    =
    p_S.
\]
Thus the pullback transfer map sends the sharp event \(1_S\) to the
conditional probability
\[
    \mathcal T_\Lambda^\Omega(1_S)(y)
    =
    \mathbb P(X\in S\mid Y=y).
\]
Consequently, the sharp event \(S\) remains sharp after observation if
and only if
\[
    \mathbb P(X\in S\mid Y=y)\in\{0,1\}
\]
for every \(y\) with \(\mathbb P(Y=y)>0\).

\subsection{An Explicit Matrix Example of Sharpness Loss}
The following finite-dimensional example illustrates Corollary~\ref{co-proj} and the defect $\Delta_{\Lambda,\Omega}(P)$. It shows exactly how precomposition can turn a sharp Radon--Nikodym derivative (a projection) into a non-sharp positive contraction.

Let \(\Omega:\M_2\to\mathbb{C}\) be the standard trace map, \(\Omega(X)=\mathrm{Tr}(X)\). Its minimal Kraus representation requires two operators from \(\mathbb{C}\) to \(\mathbb{C}^2\):
\[
    V_1 = \begin{pmatrix} 1 \\ 0 \end{pmatrix}, \qquad
    V_2 = \begin{pmatrix} 0 \\ 1 \end{pmatrix}.
\]
Here, \(\pi_\Omega(\M_2)' \cong \M_2\). A positive contraction \(D \in \M_2\) defines a submap
\[
\Omega_D(X) = \sum_{i,j=1}^2 d_{ij} V_i^* X V_j = \mathrm{Tr}(D^T X).
\]
If \(D\) is a projection matrix, \(\Omega_D\) is sharp relative to \(\Omega\).

Now, define a completely positive map \(\Lambda:\M_2\to\M_2\) parametrized by \(p \in (0,1)\), given by the Kraus operators:
\[
    L_1 = \sqrt{1-p} \begin{pmatrix} 1 & 0 \\ 0 & 1 \end{pmatrix}, \qquad
    L_2 = \sqrt{p} \begin{pmatrix} 1 & 0 \\ 0 & -1 \end{pmatrix}.
\]
The composite Kraus operators \(W_{ji} = L_j V_i\) are:
\[
    W_{11} = \sqrt{1-p}\, e_1, \quad W_{12} = \sqrt{1-p}\, e_2, \quad W_{21} = \sqrt{p}\, e_1, \quad W_{22} = -\sqrt{p}\, e_2,
\]
where \(e_1, e_2\) are the standard basis vectors of \(\mathbb{C}^2\). The linear redundancies in this family form the composite relation space \(\mathcal{R}_{\Lambda,\Omega} \subset \mathbb{C}^2 \otimes \mathbb{C}^2\), which is strictly spanned by the two independent relation vectors:
\[
    r_1 = \sqrt{p}\, e_1 \otimes e_1 - \sqrt{1-p}\, e_2 \otimes e_1, \qquad
    r_2 = \sqrt{p}\, e_1 \otimes e_2 + \sqrt{1-p}\, e_2 \otimes e_2.
\]

We evaluate the sharpness preservation for two distinct projection matrices using the structural criterion.

\textbf{Case 1: A diagonal projection.}
Let \(P = E_{11} = \begin{pmatrix} 1 & 0 \\ 0 & 0 \end{pmatrix}\). We test whether the relation space reduces \(I_2 \otimes P\). Applying the projection gives:
\[
    (I_2 \otimes P) r_1 = r_1 \in \mathcal{R}_{\Lambda,\Omega}, \qquad (I_2 \otimes P) r_2 = 0 \in \mathcal{R}_{\Lambda,\Omega}.
\]
Because \(\mathcal{R}_{\Lambda,\Omega}\) reduces \(I_2 \otimes P\), the defect \(\Delta_{\Lambda,\Omega}(P) = 0\). The submap \(\Omega_P \circ \Lambda\) remains exactly sharp.

\textbf{Case 2: A dense projection.}
Let \(Q = \frac{1}{2}\begin{pmatrix} 1 & 1 \\ 1 & 1 \end{pmatrix}\). Applying \(I_2 \otimes Q\) to \(r_1\) yields:
\[
    (I_2 \otimes Q) r_1 = \frac{1}{2} r_1 + \frac{1}{2}\Big( \sqrt{p}\, e_1 \otimes e_2 - \sqrt{1-p}\, e_2 \otimes e_2 \Big).
\]
Let \(v\) denote the second term. Because \(r_1\) belongs exclusively to the \(\cdot \otimes e_1\) subspace and \(r_2\) belongs exclusively to the \(\cdot \otimes e_2\) subspace, for \(v\) to reside in the span of \(\{r_1, r_2\}\), it must be a scalar multiple of \(r_2\). However, comparing the coefficients of \(v\) and \(r_2\) yields the requirement \(\frac{1}{2} = -\frac{1}{2}\), which is a contradiction. Thus, \((I_2 \otimes Q) r_1 \notin \mathcal{R}_{\Lambda,\Omega}\).

Because the relation space does not reduce $I_2 \otimes Q$, Corollary~\ref{co-proj} implies that sharpness is lost. As in Remark~\ref{rem-mixed-relations}, the mixed Kraus relations of $\Lambda$ mix the $Q$-sector and the $(I-Q)$-sector. Consequently, the projection $Q$ becomes a non-sharp positive contraction under the pullback transfer map, and the magnitude of this deviation is quantified by the defect norm $\|\Delta_{\Lambda,\Omega}(Q)\|$

\medskip

\noindent \textit{Conflict of Interest Statement.}  There is no conflict of interest.
\medskip

\noindent \textit{Ethical Statement.}  Not applicable. This research did not involve human participants, personal data, or animals.

\medskip
\noindent \textit{Informed Consent.} Not applicable.

\medskip
\noindent \textit{Funding Statement.} No funding was received for conducting this study.

\medskip
\noindent\textit{Data Availability Statement.} Data sharing not applicable to this article as no datasets were generated or analyzed during the current study.

  \medskip
\bibliographystyle{amsplain}

\end{document}